\documentclass[reqno, 11pt]{amsart}
\usepackage{amssymb,amsmath,amsfonts,layout}
\usepackage[margin=1in]{geometry}
\usepackage{dsfont}
\usepackage{color}
\usepackage{latexsym}
\usepackage{graphicx}
\usepackage{graphics}
\usepackage[dvips]{epsfig}
\usepackage{mathrsfs}
\usepackage{mathtools}
\usepackage{leqno}
\usepackage{stmaryrd,cite}

\usepackage{cases}
\usepackage{enumerate}
\usepackage[usenames,dvipsnames,svgnames]{xcolor}
\definecolor{refkey}{rgb}{1,0,0.5}
\definecolor{labelkey}{rgb}{0,0.4,1}
\usepackage{todonotes}
\usepackage{marginnote}

\presetkeys{todonotes}{fancyline, color=green!40}{}

\makeatletter
\makeatother
\usepackage{listings}
\usepackage{ifpdf}
\ifpdf
\usepackage[CJKbookmarks=true,
hyperindex=true,
pdfstartview=FitH,
bookmarksnumbered=true,
bookmarksopen=true,
colorlinks=true,
citecolor=blue,
linkcolor=blue,
urlcolor=blue,
pdfborder=001,
pdfauthor={},
pdftitle={},
pdfkeywords={},
]{hyperref}
\else
\usepackage[
hypertex,
hyperindex,
linkcolor=blue,
unicode,
citecolor=blue
]{hyperref}
\fi
\usepackage{cleveref}

\allowdisplaybreaks

\numberwithin{equation}{section}
\newtheorem{theorem}{Theorem}[section]

\newtheorem{lemma}[theorem]{Lemma}

\newtheorem{remark}[theorem]{Remark}


\newcommand{\da}{\delta}

\newcommand{\ga}{\gamma}
\newcommand{\ba}{\beta}

\newcommand{\Da}{\Delta}

\newcommand{\Ga}{\Gamma}
\newcommand{\Rn}{\mathbb R^n}
\newcommand{\RN} {\mathbb{R}^N}
\newcommand{\Rm}{\mathbb R^m}

\newcommand{\R}{\mathbb R}
\newcommand{\N}{\mathbb N}

\newcommand{\be}{\begin{equation}}
	\newcommand{\ee}{\end{equation}}

\newcommand{\bee}{\begin{equation*}}
	\newcommand{\eee}{\end{equation*}}

\newcommand{\bse}{\begin{subequations}}
	\newcommand{\ese}{\end{subequations}}

\newcommand{\bs}{\begin{split}}
	\newcommand{\es}{\end{split}}


\begin{document}
	
	\author[Qiu]{Yusheng Qiu$^{1}$}\thanks{$^{1}$School of Mathematics and Statistics, Jiangxi Normal University, Nanchang, Jiangxi 330022, China.
		E-mail: yusheng.qiu@jxnu.edu.cn}
	
	\author[Tan]{Jinggang Tan$^{2}$}\thanks{$^{2}$School of Mathematics and Statistics, Jiangxi Normal University, Nanchang, Jiangxi 330022, China.
		E-mail : jinggang.tan@usm.cl}
	
	\author[Xia]{Aliang Xia$^{3}$}\thanks{$^{3}$School of Mathematics and Statistics\, \&\, Jiangxi Provincial Center for Applied Mathematics, Jiangxi Normal University, Nanchang, Jiangxi 330022, China.
		E-mail: aliang\_xia@jxnu.edu.cn}

	\title[QCP to fourth-order Baouendi-Grushin type ] {Quantitative unique continuation property for fourth-order Baouendi-Grushin type subelliptic operators with a potential}

	\maketitle
	
	\begin{abstract}
		We investigate the quantitative unique continuation property for solutions to
		\begin{equation*}
			\Da^2_{X} u = V u,
		\end{equation*}
		where $\Da_{X} = \Da_{x} + |x|^{2\ba} \Da_{y}$ ($0 < \ba \leq 1$), with $x \in \mathbb{R}^{m}$ and $y \in \mathbb{R}^{n}$, denotes a class of subelliptic operators of Baouendi-Grushin type. The potential $V$ is assumed to be bounded and satisfy $|Z V| \leq K \psi$ for some constant $K>0$,
		where $Z= \sum_{i=1}^m x_i \partial_{x_i} + (\ba+1)\sum_{j=1}^n y_j \partial_{y_j}$, $\psi$ is the angle function given by $\psi = \frac{|x|^{2\ba}}{\rho^{2\ba}}$,
		 and $$\rho(x,y) = \left(|x|^{2(\beta+1)} + (\beta+1)^2 |y|^2\right)^{\frac{1}{2(\beta+1)}}$$ defines the associated pseudo-gauge. By adapting Almgren's approach, we establish an almost monotonicity formula for the frequency function. As a consequence, we derive a quantitative unique continuation result for solutions to the fourth-order subelliptic equation.
		
		\noindent {\bf Keywords}: Grushin; fourth order; quantitative unique continuation property; frequency function.\\
		
	\end{abstract}
	
	\section{Introduction}\label{intro}
	
	This note is concerned with the quantitative unique continuation property of solutions to the fourth-order Baouendi-Grushin equation perturbed by zero-order perturbation. 
	Baouendi-Grushin operator  that arises in  sub-Riemannian geometry, and control theory  \cite{Gr1}, \cite{Gr2}, \cite{B}  is degenerate elliptic operator as follows 
	\begin{align}\label{Grushin0}
		\Da_{X}=\Da_{x}+|x|^{2\ba}\Da_{y}, \quad 0<\ba\leq1,
	\end{align}
	which is the operator of a sum of squares of
	the following vector fields $X=\{X_1,\cdots,X_{m+n}\}$,
	\begin{equation}\label{vector1}
		X_i=\partial_ {x_i},
		\hspace{1mm}i=1,\cdots,m,\hspace{2mm}
		X_{m+j}=|x|^{\ba} \partial_{y_j},\hspace{1mm}j=1,\cdots,n, \quad m,n\geq1.
	\end{equation}
	Here $\beta \in (0,1]$ is a fixed parameter, $x=(x_1,\cdots,x_{m})\in
	\mathbb{R}^{m}$, $y=(y_1,\cdots,y_{n})\in \mathbb{R}^{n}$ and $\Delta_{x}, \Delta_{y}$ denote the standard Laplacian. 
	
	\medskip
	
	We observe that the operator is degenerate elliptic on the characteristic manifolds $\left\lbrace 0\right\rbrace \times \Rn$, and it is elliptic when $x\neq0$. For $\ba=0$, the operator $\Da_{X}$ is the classical Laplacian operator. We mention that if $\ba=1$, the operator $\Da_{X}$ is connected to the sub-Laplacians on the Heisenberg-type groups (see for example \cite{GR}). 
	The operator  $\Da_{X}$,   $\ba\in\N$,  \cite{Gr1,Gr2}   is hypoelliptic operator related to
	singular Cauchy-Riemann equations  which statisfies H\"ormander-type condition, \cite{H}.  Note that the Kalman rank condition may fail, but hypoellipticity could still ensure controllability.  
	
	\medskip
	
	We are interested in explicit dependence of solutions vanishing on an open subset or satisfying certain asymptotic decay conditions.  In general cases, a function $u\in L^{2}$ is said to have vanishing  order $k\ge0$ at some point $x_{0}\in\mathbb{R}^{N}$ if 
	\[
	\frac{\int_{B_{r}(x_{0})}u^{2} dx}{r^{N+2k}}=O(1), \quad \text{as}~r \rightarrow 0,
	\]
	and vanish to infinite order at some point $x_{0}$ if
	\[
	\int_{B_{r}(x_{0})}u^{2} dx=o(r^{N+2k})\quad\mbox{for any integer}\,\,k.
	\]	
	In a connected domain $\Omega$, a differential operator $L$ is said to have the strong unique continuation property  if the solution to the equation $Lu= 0$ that vanishes to infinite order at a point $x_{0}\in\Omega$ then  $u = 0$ in $\Omega$. If $L$ satisfies the strong unique continuation property, nontrivial solutions to $Lu = 0$ do not vanish in infinite order. The quantitative unique continuation property is   to characterize the vanishing orders of solutions by the coefficient functions appeared in the equations $Lu = 0$.
	
	\medskip
	
	The quantitative unique continuation property and strong unique continuation property  for solutions to various kinds of  partial differential equations have attracted a lot of attention in recent decades, and the main methods to solve these problems are Carleman estimate and frequency function argument. Precisely,
	for the stationary Schr\"odinger equation
	\begin{align}\label{vv}
		-\Delta u = V(x)u,
	\end{align}
	Carleman presented in \cite{Ca} that when the potential function $V(x)$ is bounded, the solutions of the equation in $\R^2$ have a unique continuation property. Subsequent developments in this direction is Cordes  \cite{Co} and Aronszajn  \cite{Ar} proved the strong unique continuation theorem of the problem in the case of $\RN$. In \cite{JK}, Jerison and Kenig derived the strong unique continuation property
	of the differential inequality $|\Delta u| \leq |V(x)u|$ when $V \in L_{loc}^{\frac{N}{2}} ( \RN)$. 
	On the other hand,  Garofalo and Lin \cite{GL1,GL2} presented  a geometric-variational approach to the strong unique continuation property by using the frequency function, which was first introduced by Almgren \cite{Al} for harmonic function.
	
	\medskip
	
	To quantitative unique continuation property, it is well known that the vanishing order of the solutions to  the eigenvalue problems on compact smooth Riemannian manifolds
	\begin{align*}
		-\Delta_g \varphi_\lambda = \lambda \varphi_\lambda,
	\end{align*}
	is  less than $C\sqrt{\lambda}$, where $C$ depends only on the manifold, see for example \cite{DF1,DF2,L}. Kukavica \cite{Ku} has shown  that the vanishing order of the solutions to equation \eqref{vv} is less than $C\Big(1+\|V^{-}\|_{L^\infty}^{\frac{1}{2}} + \mbox{osc}V + \|\nabla V\|_{L^\infty}\Big)$ if $V \in W^{1,\infty}$, and Bakri \cite{Ba} and Zhu \cite{Zhu} obtained  that the vanishing order is less than $C\big( 1 + \|V\|_{W^{1,\infty}}^{\frac{1}{2}}\big) $. Bourgain and Kenig  \cite{BK} found out that the vanishing order is not large than $C\big( \|V\|_{L^\infty}^{\frac{2}{3}}\big) $. We also refer to \cite{Da, ZD, KT} and references therein for more quantitative unique continuation property of solutions to second order elliptic equations.
	
	\medskip
	
	Consider  the Grushin-type Schr\"odinger equation of the form $$\Da_{X}u=Vu.$$
	The strong unique continuation property was confirmed in Garofalo   \cite{Ga}, by means of the frequency function argument for the Grushin-type equation. For the quantitative result, Banerjee, Garofalo and Manna \cite{BGM} showed that if $|V| \leq K\psi$ then solutions in $B_R$ satisfy
	\[
	\|u\|_{L^\infty(B_r)}\geq C_1\left(\frac{r}{R_0}\right)^{C_2(K^{\frac{2}{3}}+1)}
	\]
	for $0<r<\frac{R_0}{9}$ with $R_0 \in (0, \frac{R}{2}]$.
	Moreover, the operator $$\Da_{X}-b\cdot\nabla u-V$$ 
	were also studied by  Garofalo \cite{Ga}
	under some appropriate assumptions. Garofalo and Vassilev \cite{GV} testified  the strong unique continuation property 
	the following type equations
	\begin{align*}
		\sum_{i,j=1}^N X_i (a_{ij}(x,y) X_j u)=0,
	\end{align*}
	under suitable assumptions on $a_{ij}$, see also \cite{AKS}. The new  frequency function which added a weight term $(r^2-|x|^2)^\alpha$ was developed by Kukavica\cite{Ku} and Zhu\cite{Zhu}. Garofalo and Banerjee in \cite{BG1} applied this new frequency function to obtain a sharper result.\par
	\medskip
	Consider the equation
	\begin{align*}
		\sum_{i,j=1}^N X_i (a_{ij}(x,y) X_j u)=V(x,y)u,
	\end{align*} 
	Garofalo and Banerjee employed weight and frequency function in \cite{BG2}, getting the quantitative unique continuation property  in the following 
	\begin{align*}
		\|u\|_{L^{\infty}(B_r)} \geq C_1 \left(\frac{r}{R_1}\right)^{C_2 \sqrt{K}},
	\end{align*}  where $R_1$ and $K$ are positive constants in the suitable hypotheses. Whereas without the differentiability assumption on $V$, Garofalo and Manna \cite{BM} demonstrate that
	\[
	\|u\|_{L^\infty(B_r)}\geq C_1\left(\frac{r}{R_0}\right)^{C_2(K^{\frac{2}{3}}+1)},
	\]
	where $R_0$ depends on the domain of solutions.
	
	\medskip
	
	Zhu \cite{Zhu} verified that the value of $\alpha$ in the weight function $(r^2-|x|^2)^\alpha$ can be used to calculate the coefficient in Hadamard's three-ball theorem, which provides better vanishing order and eliminates the boundary terms while using the divergence theorem. It also proved \cite{Zhu} that the vanishing order of $u$ is less than $\|V\|_{L^{\infty}}$ for $n\ge 4m$ 
	to  the higher order equations with a potential
	\begin{align*}
		(-\Delta)^mu=Vu\quad\mathrm{in} \ \ \ \Rn,
	\end{align*}
	by a variant of  frequency function.
	In \cite{LTY}, the authors demonstrated the upper bound of maximal vanishing order of solutions to the bi-Laplacian (that is $m=2$).
	
	\medskip
	
	Liu and Yang \cite{LY} considered the fourth-order Grushin-type equation with a bounded potential as
	\begin{align*}
		\Da^2_{X}u=Vu\quad\mathrm{in} \ \ \ \Omega,
	\end{align*}
	where $|V|\leq \frac{c_0}{\rho^4}$, and proved the strong unique continuation property using the frequency function without weight. It is mentioned that, in subelliptic  cases, a function $u\in L^{2}_{loc}$ is said to vanish to infinite order at origin if 
	\[
	\int_{B_{r}}u^{2}\psi dxdy=O(r^{k}), \quad\mbox{for every }\,k\in\mathbb{N},
	\]
	as $r\rightarrow0$, with $B_r$ denoting the pseudo-ball, and $\psi$ is given by \eqref{psi} below.  Moreover, we say that $u$ vanishes at the origin more rapidly than any power of $k$, that is 
	\[
	\int_{B_{r}}u^{2}\psi dxdy=O\left(e^{-cr^{-\gamma}}\right),
	\]
	as $r\rightarrow0$ for some constants $c,\gamma>0$.
	
	\medskip
	
	We here focus on the above  fourth-order Grushin-type equation with a   potential and  characterize the vanishing orders of solutions. Precisely,  the main result of this paper is the following theorem.
	
	\begin{theorem}\label{thm}
		Let $m>2$, $0<\ba\leq1$, and $B_r$ is the gauge pseudo-ball centered at $0$ with radius $r$ (see \eqref{ball} below). If $u$ is a solution of
		\begin{equation} \label{equ1}
			\Da^2_{X}u=V(x,y)u \quad \mbox{in} \quad B_{1},
		\end{equation}
		where $V$ satisfies 
		\begin{equation}\label{vasump}
			|V| \leq K_1, \quad|Z V| \leq K_2 \psi,
		\end{equation}	
		for some $K_1, K_2\ge0$, and $|u|\leq C_0$, where $Z$ is a vector field defined by \eqref{Z}. Furthermore, assume $X_iX_ju \in L_{loc}^2(B_1)$ for $i,j=1,2,\cdots,m+n$. Then there exist positive numbers $c=c(m, n, \beta, K_1, C_0, u)$, $C=C(m, n, \beta,K_2, C_0, u)$ and $\overline{C}=\overline{C}(m)$ such that
		\begin{equation}\label{main}
			\left\|u\right\|_{L^\infty(B_{r})} \geq cr^{C\overline{C}^{K_1} K_1^2},
		\end{equation}
		for any $0<r<1$.
	\end{theorem}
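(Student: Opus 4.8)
The plan is to follow Almgren's frequency-function method, adapted to the fourth-order Baouendi-Grushin setting. The first step is to introduce an auxiliary second-order quantity. Since $\Da_X^2 u = Vu$ is fourth order, I would set $w = \Da_X u$, so that $\Da_X w = Vu$, and work with the vector $(u,w)$, or else directly build the frequency function out of the Dirichlet-type energy associated with the bilinear form $\int_{B_r}(\Da_X u)^2\,\psi\,dxdy$ plus lower-order corrections. Following the strategy of \cite{LY} and \cite{BM}, define on the gauge pseudo-balls the height function $H(r) = \int_{\pl B_r}(u^2 + \text{weighted gradient terms})\,\mu$ and the energy $D(r)$ involving $\int_{B_r}(\Da_X u)^2\psi + \dots$, and set the frequency $N(r) = rD(r)/H(r)$. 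The angle function $\psi$ and the pseudo-gauge $\rho$ enter because the natural "spherical" measure for the Grushin geometry is weighted by $\psi$; the vector field $Z$ is the infinitesimal generator of the anisotropic dilations $\da_\la(x,y) = (\la x, \la^{\ba+1} y)$, so $Z\rho = \rho$ and integration-by-parts identities produce $Z$ acting on $V$.

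The second step is the differentiation identities: compute $H'(r)$ and $D'(r)$ using the divergence theorem adapted to $\da_\la$, exploiting the Rellich-Nečas / Pohozaev identity for $\Da_X^2$. The key algebraic fact is that for the unperturbed operator $\Da_X^2 u = 0$ the frequency $N(r)$ is monotone nondecreasing; the perturbation $V$ contributes error terms. Here the hypotheses $|V|\le K_1$ and $|ZV|\le K_2\psi$ are exactly what is needed: bounding $|V|$ controls the zeroth-order error $\int (Vu)u\,\psi$ in terms of $H(r)$, while $|ZV|\le K_2\psi$ controls the term arising from differentiating $V$ along the dilation flow in $D'(r)$, which otherwise would not be absorbable. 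The outcome is an almost-monotonicity statement of the form $\frac{d}{dr}\log\max\{N(r), 1\} \ge -C(1 + K_1 + K_2) $, equivalently $N(r)$ stays bounded on dyadic scales once it is bounded at one scale, with the bound growing at most like $e^{cK_1}$ because the zeroth-order term enters multiplicatively in a Gronwall estimate. I would state and prove this as the main lemma before deducing the theorem.

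The third step converts the frequency bound into a doubling inequality: $H(2r) \le e^{C(N(r)+1)} H(r)$, and then iterating from $r$ up to a fixed scale $R_0 \sim \tfrac12$ gives a lower bound $H(r) \ge c\, r^{\,C N(R_0)}$. The frequency at the fixed scale $R_0$ is bounded by a constant depending on $m,n,\ba$ and on $\|V\|_\infty \le K_1$ through the almost-monotonicity Gronwall factor — this is where the $\overline{C}^{K_1}K_1^2$ shape of the exponent in \eqref{main} comes from (the $K_1^2$ rather than $K_1$ reflecting that for fourth-order equations the natural scaling of the potential term costs a square). Finally, one passes from the weighted $L^2$ height $H(r)$ to $\|u\|_{L^\infty(B_r)}$ by a standard subelliptic Caccioppoli plus De Giorgi–Nash–Moser estimate for $\Da_X^2$, valid on the gauge balls since $m>2$ guarantees the relevant Sobolev embedding and the non-integrability of the weight at $\{x=0\}$ is controlled; the constants absorb $C_0$ and the regularity assumption $X_iX_j u\in L^2_{loc}$.

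The main obstacle I anticipate is the derivation of the differentiation formula for $D'(r)$ and the identification of exactly which boundary terms appear on $\pl B_r$: because the gauge spheres are not smooth at $x=0$ and the vector fields $X_{m+j}$ degenerate there, the Rellich-Pohozaev computation must be justified by an approximation argument (cutting off a neighborhood of $\{x=0\}$, or using the regularized gauge), and one must check that all the boundary integrals over the cut locus vanish in the limit. The second delicate point is ensuring that the fourth-order cross terms — e.g. $\int \langle \na_X u, \na_X w\rangle\psi$ with $w=\Da_X u$ — combine with the correct sign so that $N(r)$ is genuinely almost-monotone and not merely bounded; this is the place where the precise choice of $H(r)$ (which weighted gradient terms to include) matters, and I would calibrate it exactly as needed to make the unperturbed frequency monotone, mirroring the bi-Laplacian case in \cite{LTY} and \cite{Zhu}.
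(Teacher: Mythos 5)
Your overall plan --- decompose $\Delta_X^2 u = Vu$ into the second-order system $(u,w)$ with $w=\Delta_X u$, build an Almgren-type frequency on gauge pseudo-balls, prove almost-monotonicity with a Gronwall-type correction from $|V|\le K_1$ and $|ZV|\le K_2\psi$, convert it into a growth estimate, and then pass from $L^2$ to $L^\infty$ via a Caccioppoli inequality and Moser iteration --- matches the paper's skeleton. The decisive divergence is in the choice of height and energy functionals, and it leaves a gap that your own last paragraph flags but does not close. You propose a boundary integral $H(r)=\int_{\partial B_r}(\cdots)\,\mu$ and an energy built around $\int_{B_r}(\Delta_X u)^2\psi$; the paper instead follows Kukavica and Zhu and works with \emph{solid} integrals against a weight that vanishes on $S_r$,
\begin{equation*}
H(r)=\int_{B_r}(u^2+w^2)(r^2-\rho^2)^\alpha\psi,\qquad
I(r)=\int_{B_r}\bigl(|Xu|^2+|Xw|^2\bigr)(r^2-\rho^2)^{\alpha+1}+\int_{B_r}(1+V)\,uw\,(r^2-\rho^2)^{\alpha+1}.
\end{equation*}
The factor $(r^2-\rho^2)^\alpha$ is precisely the device that eliminates the boundary terms over the non-smooth, degenerate gauge sphere which you identify as the main obstacle and propose to handle by an unspecified cutoff/approximation near $\{x=0\}$; with the weight, every integration by parts produces only interior integrals, and both the first variation of $H$ and the Rellich-type identity of Lemma~\ref{iden} come out cleanly. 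Your sketch does not carry out the limiting argument it invokes, so the Rellich--Pohozaev step, and hence almost-monotonicity, remain open in your version.

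Two further ingredients are missing from your outline. The cross term $\int(1+V)uw(r^2-\rho^2)^{\alpha+1}$ in $I(r)$ is essential: it is what makes the reformulation $I(r)=2(\alpha+1)\int_{B_r}(uZu+wZw)(r^2-\rho^2)^\alpha\psi$ of Lemma~\ref{Iequal} possible, and without it the Cauchy--Schwarz step $N'(r)\ge -\cdots$ fails. The resulting error terms $R_1,\dots,R_3^5$ in $I'(r)$ are then absorbed by a local Hardy-type inequality (Lemma~\ref{Hardy}) applied twice (since it is a second-order Hardy inequality and the problem is fourth order); this inequality is the technical heart of the monotonicity proof and does not appear in your plan, where you only gesture at "calibrating" $H(r)$. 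Finally, the paper deliberately replaces the doubling inequality you envisage by a Hadamard three-ball estimate for the unweighted height $h(r)=\int_{B_r}(u^2+w^2)\psi$, precisely because the $\psi$-weighted notion of vanishing order adapted to the degenerate operator makes the direct doubling route awkward; if you want to keep doubling, that conversion step would also need to be reworked.
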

	
	\begin{remark}
		\eqref{main} implies that the vanishing order of $u$ is bounded above by $\overline{C}^{K_1} K_1^2$, with $\overline{C}$ depending only on the dimension $m$ of $x$.
	\end{remark}
	
	\begin{remark}
		In fact, \eqref{main} holds with the constants $c$, $C$ and $\overline{C}$ behaved like
		\begin{equation*}
			\begin{aligned}
				\log c &= \log \left(\frac{C_0}{C(Q)K_1 \int_{B_{1/4}}\psi}\right) -4(\log 2) K_1^2 e^{8\left( \max\left\{1, \left(\frac{2}{m-2}\right)^2\right\} +2 \right) K_1}\\[2mm]
				&\quad\times\max\left\{4(m-2) , \frac{16}{(m-2)^2} , \frac{\log\frac{C_0}{h(\frac{1}{3})}}{\log\left(\frac{3}{2}\right)}\right\},
		\end{aligned}\end{equation*}
		\begin{equation*}
			\begin{aligned}
				C = 2\max\left\{8(m-2), \frac{32}{(m-2)^2} + C(K_2), \frac{4\log\frac{C_0}{h\left(\frac{1}{3}\right)}}{\log\left(\frac{3}{2}\right)} , 4\right\},
			\end{aligned}
		\end{equation*}
		\begin{equation*}
			\overline{C} = e^{8\max\left\{1, \left(\frac{2}{m-2}\right)^2\right\} +2 },
		\end{equation*}
		where $Q = m + (\ba+1) n$, $\psi$ is defined by \eqref{psi}, and the value of $h(\frac{1}{3})$ can be obtained from \eqref{h} only depending on $\beta$ and $u$. In particular, when both $m$ and $n$ are sufficient large, Theorem \ref{thm} infers that
		\begin{equation*}
			\left\|u\right\|_{L^\infty(B_{r})} \geq cr^{8(m-2)e^{10K_1}K_1^2}.
		\end{equation*}
	\end{remark}
	
	One of  ingradients is to construct a generalized Almgren-type frequency function introduced by \cite{Zhu}. The classical Almgren frequency function is defined as the ratio between the quadratic form associated with the equation and the $L^2$-norm of the trace of solutions, integrated over a ball of radius $r$ and its boundary, respectively. In the new frequency function proposed here, the boundary integral is replaced by integration over the interior of the domain combining with a weight function $(r^2-\rho^2)^\alpha$, which vanishes on the boundary. To give the estimate of the derivative of the frequency function, we establish its almost monotonicity
	formula of such frequency function
	by means of  the estimate of the derivatives of the associated energy functional.
	
	\medskip
	
	To overcome this, we develop a Hardy-type inequality and a Rellich-type identity to control the problematic terms. It is worth noting that Lemma \ref{Hardy} is a Hardy-type inequality for second-order equations rather than fourth-order equations. Therefore, we apply this lemma twice to complete the proof. Due to the difficuty of the Baouendi-Grushin operator, the definition of the vanishing order involves the complexity of $\psi$,  the role of the doubling estimate employed in \cite{LTY} has replaced by Hadamard's three-ball theorem which  come from the monotonicity property of frequency function. Finally, adaptting  additional regularity results, including a Caccioppoli-type inequality and a Moser iteration result, we deduce the  the maximum norm of associated height function.
	
	\medskip 	The rest of the paper is organized as follows. In Section \ref{nota}, we introduce some basic notations and preliminary results associated to the Grushin-type operators. In Section \ref{mono}, we decompose the equation \eqref{equ1} into the appropriate system of second-order equations. We also provide the almost monotonicity of the frequency function. In Section \ref{three}, we obtain Hadamard's three-ball theorem for the Grushin-type subelliptic system, which is applied to prove our main result in Section \ref{maxi}.

	\section{Notations and preliminary results}\label{nota}
	Let $\{X_i\}$ for $i=1, \cdots, N$ be defined as in \eqref{vector1}. Denote an arbitrary point in $\RN$ as  $(x,y) \in \Rm \times \Rn$. Given a function $f$, we denote
	\begin{equation*}
		Xf= (X_1f, \cdots,X_Nf),\ \ \ \ \ \ \ \ \ |Xf|^2= \sum_{i=1}^N (X_i f)^2,
	\end{equation*}
	respectively the Grushin gradient and the square of its length. Now given a vector function $F=\left(f_1, \cdots,f_N\right)$, we also define the Grushin divergence associated to  the vector fields \eqref{vector1} as follows
	\begin{equation}\label{divx}
		{\rm div}_X F = \sum_{i=1}^N X_i f_i.
	\end{equation}
	We recall from \cite{Ga} that the following family of anisotropic dilations are associated with the vector fields in \eqref{vector1},
	\begin{equation}\label{dil}
		\da_a(x,y)=(a x,a^{\ba+1} y),\ \ \ \ \ \ \ \ a>0.
	\end{equation}
	which generate the homogeneous dimension for $\Da_{X}$,
	\begin{equation*}
		Q= m + (\ba+1) n,
	\end{equation*}
	since denoting by $dxdy$ Lebesgue measure in $\R^N$, we have $d(\da_a(x,y)) = a^Q dx dy$.
	In addition, one can find the following remarkable fundamental solution $\Ga$ of $\Da_{X}$  with pole at the origin is given by the formula
	\[
	\Gamma(x,y) = \frac{C}{\rho(x,y)^{Q-2}},\ \ \ \ \ \ \ \ \ (x,y)\not= (0,0),
	\]
	where $\rho$ is the pseudo-gauge 
	\begin{equation}\label{rho}
		\rho(x,y)=(|x|^{2(\beta+1)} + (\beta+1)^2 |y|^2)^{\frac{1}{2(\beta+1)}}.
	\end{equation}
	We respectively denote by 
	\begin{equation}\label{ball}
		B_r = \{(x,y)\in \R^N\mid \rho(x,y) < r\},\ \ \ \ \ \ \ \ S_r = \{(x,y)\in \R^N\mid \rho(x,y) = r\},
	\end{equation}
	the gauge pseudo-ball and sphere centered at $0$ with radius $r$. 
	
	We also need the angle function $\psi$ introduced in \cite{Ga},
	\begin{equation}\label{psi}
		\psi = |X\rho|^2= \frac{|x|^{2\ba}}{\rho^{2\ba}}.
	\end{equation}
	The function $\psi$ vanishes on the characteristic manifold $M=\{0\} \times \Rn$ and clearly satisfies $0\leq \psi \leq 1$. 
	
	The generator of the group of dilations \eqref{dil}  is given by the following smooth vector field
	\begin{equation}\label{Z}
		Z= \sum_{i=1}^m x_i \partial_{x_i} + (\ba+1)\sum_{j=1}^n y_j \partial_{y_j}.
	\end{equation}
	A  function $v$ is $\da_a$-homogeneous of degree $\kappa$ if and only if $Zv=\kappa v$. Since $\rho$ in \eqref{rho} is homogeneous of degree one, we have
	\begin{equation}\label{hg}
		Z\rho=\rho.
	\end{equation}
	
	We note the important facts that
	\begin{equation}\label{divZ}
		\operatorname{div} Z = Q,\quad \quad [X_i,Z] = X_i,\ \ \ i=1,\cdots,N,
	\end{equation}
	where $[X_i,Z] = X_i Z-Z X_i$ denotes their commutator.
	Directly computation implies that
	\begin{equation}\label{Xrho}
		X\rho=\frac{1}{\rho^{2\beta+1}}\left(|x|^{2\beta}x,(\beta+1)|x|^{\beta}y\right),
	\end{equation}
	hence
	\begin{equation}\label{Zeq}
		Z= \frac{\rho}{\psi} \sum_{k=1}^N X_k\rho X_k,
	\end{equation}
	and
	\begin{equation}\label{Zueq}
		Zu= \frac{\rho}{\psi} \langle X\rho, Xu \rangle,
	\end{equation}
	with $\langle \cdot ,\cdot  \rangle$ denoting the standard inner product.
	Since $\psi$ is homogeneous of degree zero with respect to \eqref{dil}, one has
	\begin{equation}\label{Zpsi}
		Z\psi = 0.
	\end{equation}
	
	We recall that  $V$ in \eqref{equ1} satisfies the following hypothesis for some $K_1,K_2\ge 0$,
	\begin{equation}
		|V| \leq K_1, \quad |ZV| \leq K_2 \psi,
	\end{equation}
	where $\psi$ indicates the function introduced in \eqref{psi} above. Without loss of generality we assume henceforth that $K_1 = \|V\|_{L^\infty(B_1)} \geq 1$ and $K_2 = \|ZV\|_{L^\infty(B_1)} \geq 1$.

	\section{Monotonicity of a generalized frequency}\label{mono}
	
	In this section,we decompose \eqref{equ1} into the appropriate system of second-order equations. It is the major goal to establish the almost monotonicity of frequency function $N(r)$ in this section. Finally with the help of the monotonicity property, we can obtain Hadamard's three-ball theorem of $h(r)$ which is the height function without weight.
	
	\medskip
	
	Let $z=(x,y)$ with $z\in\mathbb{R}^{m+n}$. We decompose equation (\ref{equ1}) into the following system of two second-order equations:
	\begin{equation}\label{equ}
		\left\{\begin{array}{lll}
			\Delta_X u =  w,\\[2mm]
			\Delta_X w=V(z)u.
		\end{array}\right.
	\end{equation}
	
	For simplicity, we denote  $\|V\|_{L^{\infty}}\equiv\|V\|_{L^{\infty}(B_1)}$ and $\|ZV\|_{L^{\infty}}\equiv\|ZV\|_{L^{\infty}(B_1)}$. Throughout this paper, we omit the Lebesgue measure $dxdy$ in the integrals.
	For $0< r <1$, we define the generalized height function $H(r)$ of $u$ in $B_r$ as follows
	
	\begin{equation}\label{H}
		H(r)= \int_{B_r} (u^2+w^2) (r^2-\rho^2)^{\alpha} \psi, 
	\end{equation}
	where $\rho$ is the pseudo-gauge in \eqref{rho} above, and $\alpha > 0$ is going to be fixed later.
	We also define
	\begin{align}\label{I}
		I(r) =   \int_{B_r}|Xu|^2(r^2-\rho^2)^{\alpha+1}+\int_{B_r}|Xw|^2(r^2-\rho^2)^{\alpha+1} +\int_{B_r}\left(1+V\right)uw(r^2-\rho^2)^{\alpha+1},
	\end{align}
	and
	\begin{equation}\label{N}
		N(r) = \frac{I(r)}{H(r)}
	\end{equation}
	as the generalized energy functional and the generalized frequency of $u$ in $B_r$.
	We can also write $I$ in another form, as shown in the following lemma.
	
	\begin{lemma}\label{Iequal}
		Let $u$ and $w$ satisfy \eqref{equ}, then
		\begin{equation}\label{Iequ}
			I(r)=  2(\alpha +1)\int_{B_r}  (uZu+wZw) (r^2-\rho^2)^{\alpha}\psi,
		\end{equation}
		where $I(r)$ is defined in \eqref{I}.
	\end{lemma}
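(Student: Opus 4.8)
The plan is to integrate by parts in each term of $I(r)$ from \eqref{I}, using that the weight $(r^2-\rho^2)^{\alpha+1}$ vanishes on $S_r$ so that no boundary flux survives. Put $\phi=(r^2-\rho^2)^{\alpha+1}$. From the Leibniz rule for the Grushin divergence, ${\rm div}_X(\phi\,u\,Xu)=\phi\,|Xu|^2+\phi\,u\,\Da_X u+u\,\langle X\phi,Xu\rangle$, and the analogous identity with $u$ replaced by $w$. Since $\phi$ vanishes on $S_r$, the flux of $\phi\,u\,Xu$ and of $\phi\,w\,Xw$ across $S_r$ is zero, so the Grushin divergence theorem yields
\[
\int_{B_r}\phi\,|Xu|^2=-\int_{B_r}\phi\,u\,\Da_X u-\int_{B_r}u\,\langle X\phi,Xu\rangle,\qquad \int_{B_r}\phi\,|Xw|^2=-\int_{B_r}\phi\,w\,\Da_X w-\int_{B_r}w\,\langle X\phi,Xw\rangle.
\]
Next I would insert $\Da_X u=w$ and $\Da_X w=Vu$ from \eqref{equ}, which turns the first terms on the right into $-\int_{B_r}\phi\,uw$ and $-\int_{B_r}\phi\,Vuw$.

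For the cross terms, $X(\rho^2)=2\rho X\rho$ gives $X\phi=-2(\alpha+1)(r^2-\rho^2)^{\alpha}\rho\,X\rho$, and then \eqref{Zueq} — which says $\rho\,\langle X\rho,Xv\rangle=\psi\,Zv$ for any $v$ — turns the two cross terms into $2(\alpha+1)\int_{B_r}uZu\,(r^2-\rho^2)^{\alpha}\psi$ and $2(\alpha+1)\int_{B_r}wZw\,(r^2-\rho^2)^{\alpha}\psi$. Thus the two identities above become
\[
\int_{B_r}\phi\,|Xu|^2=-\int_{B_r}\phi\,uw+2(\alpha+1)\int_{B_r}uZu\,(r^2-\rho^2)^{\alpha}\psi,\qquad \int_{B_r}\phi\,|Xw|^2=-\int_{B_r}\phi\,Vuw+2(\alpha+1)\int_{B_r}wZw\,(r^2-\rho^2)^{\alpha}\psi.
\]
Adding these two and then adding $\int_{B_r}(1+V)uw\,\phi$, the left side is exactly $I(r)$, while on the right $-\int_{B_r}\phi\,uw-\int_{B_r}\phi\,Vuw+\int_{B_r}(1+V)uw\,\phi=0$; hence $I(r)=2(\alpha+1)\int_{B_r}(uZu+wZw)(r^2-\rho^2)^{\alpha}\psi$, which is \eqref{Iequ}.

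The step I expect to be the main obstacle is making the two integrations by parts rigorous, that is, checking that $\int_{B_r}{\rm div}_X(\phi\,u\,Xu)=0$ and likewise for $w$, in view of the degeneracy of the fields $X$ on the characteristic manifold $M=\{0\}\times\Rn$. Near $S_r$ there is no trouble: $\phi$ vanishes to order $\alpha+1>0$ there, while the hypotheses $X_iX_ju\in L^2_{loc}(B_1)$, $|u|\le C_0$ and $|V|\le K_1$ make the integrands $u\,\langle X\phi,Xu\rangle$, $\phi\,u\,\Da_X u$ and their $w$-counterparts integrable up to $S_r$. Near $M$ one cuts off a tubular neighborhood of $M$, integrates by parts on the complement, and lets the cutoff shrink, using $0\le\psi\le1$ and the above bounds to kill the error terms. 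Granting this, the lemma follows from the elementary algebra above.
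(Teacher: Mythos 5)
Your proof is correct and follows essentially the same route as the paper: replace the $(1+V)uw$ term by $u\Delta_X u + w\Delta_X w$ via the system, recognize a Grushin divergence, integrate by parts with the vanishing weight $(r^2-\rho^2)^{\alpha+1}$, and convert $\rho\langle X\rho, X\cdot\rangle$ to $\psi Z(\cdot)$ via \eqref{Zueq}. You organize it as two separate integrations by parts rather than one combined one, and you add a useful remark about justifying the divergence theorem near the characteristic manifold $M$ which the paper leaves implicit, but the mathematical content is identical.
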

	\begin{proof}
		By (\ref{Zueq}) and the system \eqref{equ}, applying the divergence theorem, we can get
		\begin{align*}
			I(r) &= \int_{B_r}|Xu|^2(r^2-\rho^2)^{\alpha+1}+\int_{B_r}|Xw|^2(r^2-\rho^2)^{\alpha+1}\nonumber\\[2mm]
			&\quad+\int_{B_r} u\Delta_X u(r^2-\rho^2)^{\alpha+1}+\int_{B_r}w\Delta_X w(r^2-\rho^2)^{\alpha+1}\nonumber\\[2mm]
			&=\int_{B_r}\mbox{div}_X\left(uXu+wXw\right)(r^2-\rho^2)^{\alpha+1}\nonumber\\[2mm]
			&=-\int_{B_r}\langle\left(uXu+wXw\right),X(r^2-\rho^2)^{\alpha+1}\rangle\nonumber\\[2mm]
			&=2(\alpha+1)\int_{B_r}\langle\left(uXu+wXw\right),\rho (X\rho) (r^2-\rho^2)^\alpha\rangle\nonumber\\[2mm]
			&=2(\alpha +1)\int_{B_r}  (uZu+wZw) (r^2-\rho^2)^{\alpha}\psi.
		\end{align*}
	\end{proof}
	
	The main result of this section is the following almost monotonicity for the generalized frequency $N(r)$.
	\begin{theorem}\label{T:mono}
		Let u and w satisfy the equations \eqref{equ}, then the function
		\[
		r \mapsto e^{C\|V\|_{L^\infty}r^2}\big(N(r) + C'\|V\|_{L^\infty}^2(\alpha+1) + \frac{1}{4C} \frac{\|ZV\|_{L^\infty}}{\|V\|_{L^\infty}} \big)
		\]
		is monotone non-decreasing on the interval $(0,1)$, where $C$ and $C'$ are positive numbers only depending on $m$.
		
		Moreover,
		\begin{align*}
			C = 4\max\left\{1, \left(\frac{2}{m-2}\right)^2\right\} + 1,
		\end{align*}
		\begin{align*}
			C' = 2(m-2) \max \left\{1, \frac{4}{(m-2)^3}\right\}.
		\end{align*}
	\end{theorem}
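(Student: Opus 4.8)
\emph{Strategy.} The plan is to run Almgren's frequency scheme adapted to the Grushin geometry and the weight $(r^2-\rho^2)^\alpha$, treating $(u,w)$ as the coupled second-order system \eqref{equ}. First, rescale by the anisotropic dilation $z=\delta_r(\zeta)$, under which $\rho\circ\delta_r=r\rho$, $\psi\circ\delta_r=\psi$, $d(\delta_r\zeta)=r^Q\,d\zeta$ and $\frac{d}{dr}[v\circ\delta_r]=\frac1r(Zv)\circ\delta_r$; this gives $H(r)=r^{Q+2\alpha}\int_{B_1}(\tilde u^2+\tilde w^2)(1-\rho^2)^\alpha\psi$ with $\tilde v=v\circ\delta_r$, and differentiating, undoing the rescaling, and invoking Lemma~\ref{Iequal} yields
\[
rH'(r)=(Q+2\alpha)H(r)+2\int_{B_r}(uZu+wZw)(r^2-\rho^2)^\alpha\psi=(Q+2\alpha)H(r)+\tfrac{1}{\alpha+1}I(r),
\]
so $rH'(r)/H(r)=(Q+2\alpha)+\frac{N(r)}{\alpha+1}$. (One needs $\alpha$ large enough that the weight and its relevant derivatives vanish on $S_r$, so no boundary terms appear below; the hypothesis $X_iX_ju\in L^2_{loc}(B_1)$ legitimizes the integrations by parts.)

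\emph{Differentiating $I$ via a commutator and a Rellich identity.} Writing $I(r)=r^{Q+2\alpha}\int_{B_1}(|X\tilde u|^2+|X\tilde w|^2+r^2(1+\tilde V)\tilde u\tilde w)(1-\rho^2)^{\alpha+1}$ and differentiating produces, after undoing the rescaling, the term $(Q+2\alpha)I(r)$ together with $2\int_{B_r}(\langle Xu,XZu\rangle+\langle Xw,XZw\rangle)(r^2-\rho^2)^{\alpha+1}$, the term $2F(r)$ with $F(r):=\int_{B_r}(1+V)uw(r^2-\rho^2)^{\alpha+1}$, the $ZV$-term $\int_{B_r}(ZV)uw(r^2-\rho^2)^{\alpha+1}$, and $\int_{B_r}(1+V)(wZu+uZw)(r^2-\rho^2)^{\alpha+1}$. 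The commutator $[X_i,Z]=X_i$ from \eqref{divZ} gives $\langle Xu,XZu\rangle=\tfrac12 Z(|Xu|^2)+|Xu|^2$; integrating by parts against $Z$ (using $\operatorname{div}Z=Q$, $Z\rho=\rho$, and the vanishing of the weight on $S_r$) reduces the $\langle Xu,XZu\rangle$-integrals to a combination of $\int|Xu|^2(r^2-\rho^2)^{\alpha+1}$ and $r^2\int|Xu|^2(r^2-\rho^2)^\alpha$; and a Rellich-type identity (multiply $\Delta_X u=w$, resp.\ $\Delta_X w=Vu$, by $(Zu)(r^2-\rho^2)^{\alpha+1}$, resp.\ $(Zw)(r^2-\rho^2)^{\alpha+1}$, integrate by parts, and use \eqref{Zueq} together with the same commutator) rewrites $r^2\int(|Xu|^2+|Xw|^2)(r^2-\rho^2)^\alpha$ in terms of $E:=\int(|Xu|^2+|Xw|^2)(r^2-\rho^2)^{\alpha+1}$, the quantity $D(r):=\int_{B_r}((Zu)^2+(Zw)^2)(r^2-\rho^2)^\alpha\psi$, and $uZw$, $wZu$ cross terms. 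Collecting everything one finds
\[
rI'(r)=(Q+2\alpha)I(r)+4(\alpha+1)D(r)+2F(r)+\int_{B_r}(ZV)uw(r^2-\rho^2)^{\alpha+1}+R(r),
\]
where $R(r)$ collects the remaining cross terms, of the form $\int(1\pm V)(uZw\pm wZu)(r^2-\rho^2)^{\alpha+1}$.

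\emph{Cauchy--Schwarz and the main cancellation.} Since $I(r)=2(\alpha+1)\int_{B_r}(uZu+wZw)(r^2-\rho^2)^\alpha\psi$ by Lemma~\ref{Iequal}, Cauchy--Schwarz with weight $(r^2-\rho^2)^\alpha\psi$ gives $I(r)^2\le 4(\alpha+1)^2 H(r)D(r)$, hence $\frac{D(r)}{H(r)}\ge\frac{N(r)^2}{4(\alpha+1)^2}$. Then, from $rN'(r)=\frac{rI'(r)}{H(r)}-N(r)\frac{rH'(r)}{H(r)}$, the $(Q+2\alpha)N(r)$ contributions cancel, and $\frac{4(\alpha+1)D(r)}{H(r)}-\frac{N(r)^2}{\alpha+1}\ge 0$, leaving
\[
rN'(r)\ \ge\ \frac{1}{H(r)}\Big(2F(r)+\int_{B_r}(ZV)uw(r^2-\rho^2)^{\alpha+1}+R(r)\Big).
\]

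\emph{Bounding the error — the main obstacle — and conclusion.} The terms $F(r)$ and $R(r)$ are sign-indefinite; in fact $F(r)$ is present even for $V=0$ and is precisely the fourth-order obstruction, so they must be absorbed rather than discarded. I would split each such term by Young's inequality into pieces of the form $\epsilon\int\rho^{-2}u^2(\cdots)\psi$ and $\epsilon^{-1}\int\rho^2 w^2(\cdots)$ (and symmetrically in $u\leftrightarrow w$), apply the Hardy-type inequality Lemma~\ref{Hardy} once to $u$ and once to $w$ — this is where $m>2$ enters and where the factors $\tfrac{2}{m-2}$ in $C,C'$ originate — and use $|V|\le K_1$, $|ZV|\le K_2\psi$, $0\le\psi\le1$, and $\rho<r<1$ to trade the resulting quantities against $D(r)$, $I(r)$ and $H(r)$ at the cost of powers of $r$. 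Choosing $\epsilon$ small enough that the $D(r)$-part is absorbed into the nonnegative margin of the previous step leaves an estimate
\[
N'(r)\ \ge\ -2C\|V\|_{L^\infty}\,r\Big(N(r)+C'\|V\|_{L^\infty}^2(\alpha+1)+\tfrac{1}{4C}\tfrac{\|ZV\|_{L^\infty}}{\|V\|_{L^\infty}}\Big),
\]
with $C,C'$ as in the statement (the shifts $C'\|V\|_{L^\infty}^2(\alpha+1)$ and $\tfrac{1}{4C}\tfrac{\|ZV\|_{L^\infty}}{\|V\|_{L^\infty}}$ absorbing respectively the $Vuw$ Young losses and the $ZV$-contribution); multiplying by $e^{C\|V\|_{L^\infty}r^2}$ shows that $r\mapsto e^{C\|V\|_{L^\infty}r^2}\big(N(r)+C'\|V\|_{L^\infty}^2(\alpha+1)+\tfrac{1}{4C}\tfrac{\|ZV\|_{L^\infty}}{\|V\|_{L^\infty}}\big)$ is non-decreasing on $(0,1)$. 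The delicate point is the bookkeeping of the $\alpha$- and $m$-dependence of the constants throughout, and ensuring the Hardy absorptions really leave a strictly positive multiple of $D(r)$ available for the Cauchy--Schwarz step.
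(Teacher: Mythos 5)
Your proposal follows essentially the same route as the paper: the first-variation formula $rH'=(Q+2\alpha)H+\frac{1}{\alpha+1}I$, the Rellich-type identity to compute $rI'$ (this is the paper's Lemma \ref{iden}), the Cauchy--Schwarz cancellation $I^2\le 4(\alpha+1)^2HD$, and the Young--Hardy absorption of the cross-terms (the paper's Lemma \ref{Hardy} applied once to $u$ and once to $w$), ending with a first-order differential inequality for $N$. One caution on the last step: you should \emph{not} chip away at $D(r)$ when absorbing the error terms, since Cauchy--Schwarz leaves no positive margin there (it can be an equality); the paper instead absorbs the $\int|Xu|^2$ and $\int|Xw|^2$ pieces produced by Hardy back into $I(r)$ and $H(r)$ (this requires the auxiliary bound $I_1+I_2\le 2I+\cdots H$, equation \eqref{I1I2}), which is why the final inequality is linear, not quadratic, in $N$.
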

	The proof of Theorem \ref{T:mono} will be divided into several steps.
	Now we start to prove the first variation formula for $H(r)$.
	
	\begin{lemma}\label{H1}
		For every $0<r<1$, one has
		\begin{equation}\label{hr}
			H'(r)= \frac{2\alpha + Q }{r} H(r) + \frac{1}{(\alpha+1)r} I(r),
		\end{equation}
		where $H(r)$ is given by (\ref{H}).
	\end{lemma}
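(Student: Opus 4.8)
The plan is to differentiate $H(r)$ directly, exploiting that the weight $(r^2-\rho^2)^\alpha$ vanishes on the sphere $S_r$ (because $\alpha>0$), so that the variation of the domain of integration contributes nothing. Writing $H(r)=\int_{B_r}(u^2+w^2)(r^2-\rho^2)^\alpha\psi$ and differentiating in $r$, the boundary integral over $S_r$ of the integrand vanishes since $(r^2-\rho^2)^\alpha|_{S_r}=0$, and the only surviving term comes from differentiating the weight, giving $H'(r)=\int_{B_r}2\alpha r(r^2-\rho^2)^{\alpha-1}(u^2+w^2)\psi$. (Making this rigorous means approximating $B_r$ by $B_{r-\varepsilon}$ and using local integrability of $u^2,w^2$ against the weight, where the standing regularity hypotheses on $u$ and $w$ enter.)

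Second, I would restore the exponent $\alpha$ in place of $\alpha-1$ using the homogeneity identity $Z\rho=\rho$ from \eqref{hg}. This gives $Z[(r^2-\rho^2)^\alpha]=-2\alpha\rho^2(r^2-\rho^2)^{\alpha-1}$, hence the algebraic identity $2\alpha r^2(r^2-\rho^2)^{\alpha-1}=2\alpha(r^2-\rho^2)^\alpha-Z[(r^2-\rho^2)^\alpha]$. Multiplying the expression for $H'(r)$ by $r/r$ and substituting yields $H'(r)=\frac{2\alpha}{r}H(r)-\frac1r\int_{B_r}Z[(r^2-\rho^2)^\alpha](u^2+w^2)\psi$.

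Third, I would integrate the last integral by parts along the vector field $Z$. Using $\operatorname{div}Z=Q$ from \eqref{divZ} and the vanishing of $(r^2-\rho^2)^\alpha$ on $S_r$ (so the boundary term drops), one obtains $\int_{B_r}Z[(r^2-\rho^2)^\alpha](u^2+w^2)\psi=-Q\,H(r)-\int_{B_r}(r^2-\rho^2)^\alpha\,Z\big[(u^2+w^2)\psi\big]$. Since $Z\psi=0$ by \eqref{Zpsi}, we have $Z[(u^2+w^2)\psi]=2(uZu+wZw)\psi$, so plugging back gives $H'(r)=\frac{2\alpha+Q}{r}H(r)+\frac2r\int_{B_r}(uZu+wZw)(r^2-\rho^2)^\alpha\psi$. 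The proof then concludes by recognizing, via Lemma \ref{Iequal}, that $2(\alpha+1)\int_{B_r}(uZu+wZw)(r^2-\rho^2)^\alpha\psi=I(r)$, whence the last term equals $\frac{1}{(\alpha+1)r}I(r)$, which is the asserted formula \eqref{hr}.

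The main obstacle is the analytic justification of differentiating under the integral sign and of the integration by parts in the presence of two singular/degenerate features: the factor $(r^2-\rho^2)^{\alpha-1}$ blows up near $S_r$, and $\psi$ degenerates on the characteristic manifold $\{0\}\times\mathbb R^n$. I would deal with this by a limiting argument on shrinking pseudo-balls $B_{r-\varepsilon}$ together with a cutoff away from $\{0\}\times\mathbb R^n$, using the assumed regularity $X_iX_ju\in L^2_{\mathrm{loc}}(B_1)$ (so that $Zu,Zw$ are controlled) to pass to the limit; this is routine but needs checking. A cleaner alternative that sidesteps the $(r^2-\rho^2)^{\alpha-1}$ singularity is to rescale via the anisotropic dilation $\delta_r$ of \eqref{dil}: since $\rho(\delta_r\,\cdot)=r\rho(\cdot)$, $\psi$ is $\delta_r$-invariant, and the Jacobian equals $r^Q$, one gets $H(r)=r^{Q+2\alpha}\int_{B_1}(u^2+w^2)(\delta_r\,\cdot)(1-\rho^2)^\alpha\psi$; differentiating in $r$ and using $\partial_r\big[f(\delta_r\,\cdot)\big]=\frac1r(Zf)(\delta_r\,\cdot)$ reproduces \eqref{hr} after changing variables back and invoking Lemma \ref{Iequal}.
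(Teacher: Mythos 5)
Your primary argument is correct and mirrors the paper's proof step for step: differentiate under the integral, split the $\alpha{-}1$ weight using $Z(r^2-\rho^2)^\alpha=-2\alpha\rho^2(r^2-\rho^2)^{\alpha-1}$, integrate by parts along $Z$ with $\operatorname{div}Z=Q$ and $Z\psi=0$, and invoke Lemma~\ref{Iequal} to recognize $I(r)$. The rescaling alternative via the anisotropic dilations $\delta_r$ (using $\rho\circ\delta_r=r\rho$, $\delta_r$-invariance of $\psi$, and the Jacobian $r^Q$) is a clean, correct variant that sidesteps the $(r^2-\rho^2)^{\alpha-1}$ computation, though the paper does not take that route.
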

	
	\begin{proof}
		By differentiate (\ref{H}), noting that $(r^2-\rho^2)^{\alpha}$ vanishes on $S_{r}$, we get
		\begin{align}\label{H'}
			H'(r) & = 2\alpha r \int_{B_r}  (u^2+w^2) (r^2 - \rho^2)^{\alpha-1} \psi\nonumber
			\\[2mm]
			& = \frac{2\alpha} {r} \int_{B_r}  \left(u^2+w^2\right)\left(r^2 - \rho^2\right)^{\alpha} \psi\nonumber
			\\[2mm]
			& \quad + \frac{2\alpha} {r} \int_{B_r}  (u^2+w^2) \rho^2 (r^2 - \rho^2)^{\alpha - 1} \psi\nonumber
			\\[2mm]
			& =: \frac{2\alpha} {r} H(r) + K(r),
		\end{align}
		which uses the identity
		\[
		(r^2 - \rho^2)^{\alpha-1} = \frac{1}{r^2} (r^2 - \rho^2)^{\alpha} + \frac{\rho^2}{r^2}(r^2 - \rho^2)^{\alpha-1}.
		\]
		By (\ref{hg}), we can get
		\begin{equation}\label{Zr}
			Z(r^2-\rho^2)^{\alpha} = -2\alpha\rho^2(r^2-\rho^2)^{\alpha-1}.
		\end{equation}
		Using (\ref{Zpsi}), (\ref{divZ}), (\ref{Zr}), and integration by parts, one has
		\begin{align}\label{K}
			K(r)&=-\frac{1}{r}\int_{B_r}\left(u^2+w^2\right)Z(r^2-\rho^2)^{\alpha}\psi\nonumber\\[2mm]
			&=\frac{1}{r}\int_{B_r}\mbox{div}\left((u^2+w^2)Z\psi\right)(r^2-\rho^2)^{\alpha}\nonumber\\[2mm]
			&=\frac{Q}{r}\int_{B_r}(u^2+w^2)(r^2-\rho^2)^{\alpha}\psi\nonumber\\[2mm]
			&\quad +\frac{2}{r}\int_{B_r}(uZu+wZw)(r^2-\rho^2)^{\alpha}\psi.
		\end{align}
		Substituting (\ref{K}) into (\ref{H'}), and recalling \eqref{Iequ} in Lemma \ref{Iequal}, the proof is completed.
	\end{proof}
	
	Next we estimate $I'(r)$. Before doing this, we are going to prove a local version of Hardy-Rellich type inequality as below. In \cite{LY}, the authors has proved some refined Hardy inequalities without weight. 
	
	\begin{lemma}\label{Hardy}
		Let $m>2$. If $u\in L^2(B_r)$ and $Xu\in L^2(B_r)$, then it holds
		\begin{align}\label{hardy1}
			\int_{B_r}\frac{u^2}{|x|^2}(r^2-\rho^2)^{\alpha+1}\leq \left(\frac{2}{m-2}\right)^2\int_{B_r}|Xu|^2(r^2-\rho^2)^{\alpha+1}+\frac{4(\alpha+1)}{m-2}\int_{B_r}u^2(r^2-\rho^2)^{\alpha}\psi.
		\end{align}
		Moreover, if $0<\beta\leq 1$, then
		\begin{align}\label{hardy}
			\int_{B_r}\frac{u^2}{\rho^2\psi}(r^2-\rho^2)^{\alpha+1}\leq \left(\frac{2}{m-2}\right)^2\int_{B_r}|Xu|^2(r^2-\rho^2)^{\alpha+1}+\frac{4(\alpha+1)}{m-2}\int_{B_r}u^2(r^2-\rho^2)^{\alpha}\psi.\\ \nonumber
		\end{align}
	\end{lemma}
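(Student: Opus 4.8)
The plan is to prove \eqref{hardy1} first by a standard integration-by-parts (Rellich-type) argument, and then to deduce \eqref{hardy} as an immediate consequence using the pointwise bound relating $\rho^{2}\psi$ and $|x|^{2}$. For the first inequality, I would start from the vector field $\frac{x}{|x|^{2}}$ in the $x$-variables, whose Euclidean divergence in $\mathbb R^{m}$ is $\frac{m-2}{|x|^{2}}$. Multiplying by the nonnegative weight $u^{2}(r^{2}-\rho^{2})^{\alpha+1}$ and integrating over $B_{r}$, the boundary term vanishes since $(r^{2}-\rho^{2})^{\alpha+1}$ vanishes on $S_{r}$. Integrating by parts moves the divergence off the vector field and produces three kinds of terms: the target term $(m-2)\int_{B_r}\frac{u^{2}}{|x|^{2}}(r^{2}-\rho^{2})^{\alpha+1}$, a term $-2\int_{B_r}\frac{u}{|x|^{2}}\,\langle x,\nabla_{x}u\rangle(r^{2}-\rho^{2})^{\alpha+1}$ coming from differentiating $u^{2}$, and a term from differentiating $(r^{2}-\rho^{2})^{\alpha+1}$. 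For the middle term I would use Cauchy–Schwarz in the form $2|\langle \tfrac{x}{|x|},\nabla_x u\rangle|\cdot\tfrac{|u|}{|x|}\le \varepsilon\tfrac{u^{2}}{|x|^{2}}+\tfrac1\varepsilon|\nabla_x u|^{2}\le \varepsilon\tfrac{u^{2}}{|x|^{2}}+\tfrac1\varepsilon|Xu|^{2}$ (using $|\nabla_{x}u|^{2}\le |Xu|^{2}$), absorbing the $\tfrac{u^{2}}{|x|^{2}}$ piece into the left-hand side by choosing $\varepsilon$ appropriately; optimizing the constant gives the factor $\left(\tfrac{2}{m-2}\right)^{2}$ in front of $\int_{B_r}|Xu|^{2}(r^{2}-\rho^{2})^{\alpha+1}$.

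The remaining term is the one coming from $\nabla_x\big((r^{2}-\rho^{2})^{\alpha+1}\big)$. Here I would compute $\langle x,\nabla_{x}\rho^{2}\rangle$ explicitly: since $\rho^{2(\beta+1)}=|x|^{2(\beta+1)}+(\beta+1)^{2}|y|^{2}$, one gets $\langle x,\nabla_{x}\rho\rangle=\rho\,\psi=\rho\cdot\frac{|x|^{2\beta}}{\rho^{2\beta}}$, hence $\langle x,\nabla_x (r^2-\rho^2)^{\alpha+1}\rangle=-2(\alpha+1)(r^{2}-\rho^{2})^{\alpha}\rho\langle x,\nabla_x\rho\rangle=-2(\alpha+1)(r^{2}-\rho^{2})^{\alpha}\rho^{2}\psi$. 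This produces exactly a term $+2(\alpha+1)\int_{B_r}u^{2}(r^{2}-\rho^{2})^{\alpha}\psi$ after the integration by parts, up to the factor $\tfrac{1}{m-2}$ inherited from dividing through; tracking constants yields the coefficient $\tfrac{4(\alpha+1)}{m-2}$. Collecting everything and rearranging gives \eqref{hardy1}.

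For \eqref{hardy}, the key pointwise fact is that $\rho^{2}\psi=\rho^{2}\cdot\frac{|x|^{2\beta}}{\rho^{2\beta}}=\frac{|x|^{2\beta}}{\rho^{2\beta-2}}$, and since $|x|\le\rho$ and $0<\beta\le 1$ (so $2\beta-2\le 0$ and $\rho^{2\beta-2}\le |x|^{2\beta-2}$, equivalently $\rho^{2-2\beta}\ge |x|^{2-2\beta}$), we obtain $\rho^{2}\psi=|x|^{2\beta}\rho^{2-2\beta}\ge |x|^{2\beta}|x|^{2-2\beta}=|x|^{2}$. Therefore $\frac{1}{\rho^{2}\psi}\le\frac{1}{|x|^{2}}$ pointwise, and since the weight $(r^{2}-\rho^{2})^{\alpha+1}$ is nonnegative, \eqref{hardy} follows directly by integrating \eqref{hardy1} against this bound.

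The main obstacle I anticipate is the careful bookkeeping of constants through the integration by parts and the Cauchy–Schwarz splitting — in particular verifying that the absorption of $\int_{B_r}\frac{u^2}{|x|^2}(r^2-\rho^2)^{\alpha+1}$ into the left side is legitimate (this requires the a priori finiteness of that integral, which can be justified by a standard cutoff/approximation argument near $\{x=0\}$ using $u\in L^{2}$, $Xu\in L^{2}$ and $m>2$) and that the optimization genuinely produces $\left(\tfrac{2}{m-2}\right)^{2}$ rather than a weaker constant. A secondary technical point is that one works with the Euclidean gradient $\nabla_x$ in the $x$-slice but must pass to the Grushin gradient via $|\nabla_x u|\le |Xu|$; this inequality is exactly what makes the Grushin Hardy inequality inherit the sharp Euclidean constant.
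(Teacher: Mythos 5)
Your approach is identical to the paper's: the paper uses the Grushin vector field $h=\tfrac{1}{|x|^2}(x,0)\in\mathbb R^{m+n}$, computes $\mathrm{div}_X h=\tfrac{m-2}{|x|^2}$ and $\langle h,X\rho\rangle=\rho^{-1}\psi$, integrates by parts against $u^2(r^2-\rho^2)^{\alpha+1}$, splits the cross term by Young with weight $\tfrac{m-2}{2}$, and absorbs; your Euclidean-$\nabla_x$ bookkeeping is the same computation since $h$ has no $y$-components, and the passage to \eqref{hardy} via $\rho^2\psi\ge|x|^2$ (valid for $0<\beta\le1$) is exactly the paper's last line.

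There is, however, a concrete computational slip in the weight-derivative step. You assert $\langle x,\nabla_x\rho\rangle=\rho\psi$, but from $\nabla_x\rho=\rho^{-2\beta-1}|x|^{2\beta}\,x$ one gets
\[
\langle x,\nabla_x\rho\rangle=\frac{|x|^{2\beta+2}}{\rho^{2\beta+1}}=\frac{|x|^2}{\rho}\,\psi,
\]
not $\rho\psi$. Carrying your stated identity through literally would give $\langle x,\nabla_x(r^2-\rho^2)^{\alpha+1}\rangle=-2(\alpha+1)(r^2-\rho^2)^\alpha\rho^2\psi$, and after dividing by $|x|^2$ (the normalization built into the vector field $\tfrac{x}{|x|^2}$) you would land on $-2(\alpha+1)(r^2-\rho^2)^\alpha\tfrac{\rho^2}{|x|^2}\psi$, which is not the $\psi$-weighted term you quote next. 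With the correct identity $\langle x,\nabla_x\rho\rangle=\tfrac{|x|^2}{\rho}\psi$ the $|x|^2$ cancels cleanly and you get precisely $-2(\alpha+1)(r^2-\rho^2)^\alpha\psi$, matching the paper's $\langle h,X\rho\rangle=\rho^{-1}\psi$. So the term you wrote down at the end is right, but it does not follow from the intermediate formula as stated; fix that one line and the proof is complete, with the constants $(\tfrac{2}{m-2})^2$ and $\tfrac{4(\alpha+1)}{m-2}$ coming out exactly as in the paper from the Young split $2ab\le\tfrac{m-2}{2}a^2+\tfrac{2}{m-2}b^2$ and subsequent division by $\tfrac{m-2}{2}$.
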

	\begin{proof}
		Let
		\begin{equation*}
			h=\frac{1}{|x|^2}
			\left(
			\begin{array}{lll}
				x \\
				0
			\end{array}\right)
			\in \mathbb{R}^{m+n},
		\end{equation*}
		with $x\in \mathbb{R}^{m}$.
		Recalling \eqref{Xrho}, direct calculation shows that
		\begin{equation}\label{h-1}
			\mbox{div}_{X}h=\frac{m-2}{|x|^2}, \hspace{3mm} \quad
			\langle h,X\rho\rangle=\rho^{-(2\beta+1)}|x|^{2\beta}=\rho^{-1}\psi.
		\end{equation}
		Hence, using (\ref{divx}), (\ref{h-1}), integrating by parts and H\"older's inequality, we have
		\begin{align*}
			(m-2)\int_{B_r}\frac{u^2}{|x|^2}(r^2-\rho^2)^{\alpha+1}&=\int_{B_r}\mbox{div}_{X}h u^2(r^2-\rho^2)^{\alpha+1} \\[2mm]
			&=-\int_{B_r}\langle h,X\left[u^2(r^2-\rho^2)^{\alpha+1}\right]\rangle\\[2mm]
			&=-2\int_{B_r}\langle h,Xu\rangle  u(r^2-\rho^2)^{\alpha+1}+2(\alpha+1)\int_{B_r}u^2(r^2-\rho^2)^{\alpha}\rho\langle h,X\rho\rangle\\[2mm]
			&\leq2\int_{B_r}\frac{|u||Xu|}{|x|}(r^2-\rho^2)^{\alpha+1}+2(\alpha+1)\int_{B_r}u^2(r^2-\rho^2)^{\alpha}\psi\\[2mm]
			&\leq\frac{m-2}{2}\int_{B_r}\frac{ u^2}{|x|^2}(r^2-\rho^2)^{\alpha+1}+\frac{2}{m-2}\int_{B_r}|Xu|^2(r^2-\rho^2)^{\alpha+1}\\[2mm]
			&\quad +2(\alpha+1)\int_{B_r}u^2(r^2-\rho^2)^{\alpha}\psi,
		\end{align*}
		which can yield (\ref{hardy1}). Furthermore, if $\beta\leq 1$, then $\rho^2\psi\geq|x|^2$, so we obtain (\ref{hardy}).
	\end{proof}
	
	\begin{lemma}\label{I'r}
		Let u and w satisfy the equations \eqref{equ}, then for every $0<r<1$, one has
		\begin{align}\label{ir}
			I'(r)&\geq \frac{2\alpha + Q }{r} I(r) + \frac{4(\alpha+1)}{r} \int_{B_r} (Zu)^2(r^2-\rho^2)^{\alpha} \psi + \frac{4(\alpha+1)}{r} \int_{B_r} (Zw)^2(r^2-\rho^2)^{\alpha} \psi \nonumber\\
			&\quad - C_1 \|V\|_{L^\infty}^3 (\alpha+1)rH(r)- \frac{1}{2} \|ZV\|_{L^\infty}rH(r) - C_2 \|V\|_{L^\infty} r I(r),
		\end{align}
		where
		\begin{align*}
			C_1 = 32 (m-2)\max\left\{1, \left(\frac{2}{m-2}\right)^2\right\} \max\left\{1, \frac{4}{(m-2)^3}\right\},
		\end{align*}
		\begin{align*}
			C_2 = 8\max\left\{1, \left(\frac{2}{m-2}\right)^2\right\} + 2.
		\end{align*}
	\end{lemma}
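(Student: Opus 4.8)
The plan is to differentiate \eqref{I} directly and then, just as in the proof of Lemma \ref{H1}, trade the surplus weight factor for an integration by parts against the dilation generator $Z$. Since $(r^2-\rho^2)^{\alpha+1}$ vanishes on $S_r$, differentiation under the integral sign gives
\[
I'(r)=2(\alpha+1)r\int_{B_r}\bigl(|Xu|^2+|Xw|^2+(1+V)uw\bigr)(r^2-\rho^2)^{\alpha};
\]
writing $r^2(r^2-\rho^2)^{\alpha}=(r^2-\rho^2)^{\alpha+1}+\rho^2(r^2-\rho^2)^{\alpha}$, the first summand reproduces $\frac{2(\alpha+1)}{r}I(r)$, and for the second I use $\rho^2(r^2-\rho^2)^{\alpha}=-\frac1{2(\alpha+1)}Z(r^2-\rho^2)^{\alpha+1}$ (which follows from \eqref{hg} as in \eqref{Zr}) and integrate by parts, using $\operatorname{div}Z=Q$ from \eqref{divZ} and again the vanishing of $(r^2-\rho^2)^{\alpha+1}$ on $S_r$.

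Carrying this out, one meets $Z\bigl(|Xu|^2\bigr)=2\langle Xu,X(Zu)\rangle-2|Xu|^2$ — a consequence of the commutators $[X_i,Z]=X_i$ in \eqref{divZ} — and $Z\bigl((1+V)uw\bigr)=(ZV)uw+(1+V)\bigl(w\,Zu+u\,Zw\bigr)$. Integrating $\langle Xu,X(Zu)\rangle$ and $\langle Xw,X(Zw)\rangle$ by parts in $X$, inserting the equations $\Delta_Xu=w$, $\Delta_Xw=Vu$ from \eqref{equ}, and rewriting $\langle X\rho,Xu\rangle$ as $\frac{\psi}{\rho}Zu$ via \eqref{Zueq}, one reaches the exact identity
\[
I'(r)=\frac{2\alpha+Q}{r}I(r)+\frac{4(\alpha+1)}{r}\int_{B_r}\bigl((Zu)^2+(Zw)^2\bigr)(r^2-\rho^2)^{\alpha}\psi+\frac1r\,\mathcal E(r),
\]
where $\mathcal E(r)$ gathers $\int_{B_r}(ZV)uw(r^2-\rho^2)^{\alpha+1}$, $\int_{B_r}(1+V)uw(r^2-\rho^2)^{\alpha+1}$, and mixed terms of the shapes $\int_{B_r}V^{*}(Zu)\,w\,(r^2-\rho^2)^{\alpha+1}$ and $\int_{B_r}V^{*}u\,(Zw)\,(r^2-\rho^2)^{\alpha+1}$ with $V^{*}\in\{1,V\}$ (up to signs).

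It then suffices to bound $\mathcal E(r)$ from below. The $ZV$-term is dispatched at once: $|ZV|\le\|ZV\|_{L^\infty}\psi$, $|uw|\le\frac12(u^2+w^2)$, and $(r^2-\rho^2)^{\alpha+1}\le r^2(r^2-\rho^2)^{\alpha}$ give the contribution $-\frac12\|ZV\|_{L^\infty}r^2H(r)$, the only appearance of $\|ZV\|_{L^\infty}$. For the mixed terms I first use the pointwise bound $|Zu|^2\psi\le\rho^2|Xu|^2$, coming from \eqref{Zueq} and \eqref{psi}, and then apply Young's inequality so that one piece is a controlled multiple of $\int_{B_r}(|Xu|^2+|Xw|^2)(r^2-\rho^2)^{\alpha+1}$ — reabsorbed into $I(r)$ after handling the cross term $\int_{B_r}(1+V)uw(r^2-\rho^2)^{\alpha+1}$ via the equation and \eqref{Zueq} — while the complementary piece is of the form $(\text{a power of }\|V\|_{L^\infty})\,\frac{u^2+w^2}{\psi}(r^2-\rho^2)^{\alpha+2}$; since $(r^2-\rho^2)^{\alpha+2}\le r^4(r^2-\rho^2)^{\alpha+1}$ and $\rho\le r$, this is dominated by $r^4$ times $\int_{B_r}\frac{u^2+w^2}{\rho^2\psi}(r^2-\rho^2)^{\alpha+1}$, to which the Hardy–Rellich inequality \eqref{hardy} applies — which is why Lemma \ref{Hardy} is invoked twice, once with $u$ and once with $w$. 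Routing the mixed terms into $I(r)$ and $H(r)$ in this way leaves the leading $\frac{4(\alpha+1)}{r}$ terms untouched; keeping track of how the powers of $\|V\|_{L^\infty}$ compound through these steps (and using the normalization $\|V\|_{L^\infty}\ge1$) yields the stated coefficients $C_1\|V\|_{L^\infty}^3(\alpha+1)$ on $H(r)$ and $C_2\|V\|_{L^\infty}$ on $I(r)$, with $C_1,C_2$ depending on $m$ only through the constants $\bigl(\frac{2}{m-2}\bigr)^2$ and $\frac1{m-2}$ of \eqref{hardy}.

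\emph{The main obstacle} is the degeneracy of $\psi$ on the characteristic manifold $\{x=0\}$. The remainder terms admit no pointwise control: pairing $(Zu)\,w\,(r^2-\rho^2)^{\alpha+1}$ by Young's inequality so as to match either the good term $\int(Zu)^2(r^2-\rho^2)^{\alpha}\psi$ or $\int|Xu|^2(r^2-\rho^2)^{\alpha+1}$ necessarily produces a companion factor carrying $1/\psi$, which is integrable only thanks to Lemma \ref{Hardy}. Beyond that, the effort is entirely in the bookkeeping of constants — choosing the Young parameters so that the absorbed pieces neither erode the leading coefficient $4(\alpha+1)$ nor inflate the advertised constants, and verifying that all boundary contributions from the $X$- and $Z$-integrations by parts vanish, which relies on $\alpha>0$.
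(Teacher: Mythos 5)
Your proposal is correct and follows essentially the same route as the paper: differentiate $I$, split $r^2(r^2-\rho^2)^\alpha=(r^2-\rho^2)^{\alpha+1}+\rho^2(r^2-\rho^2)^\alpha$, convert the second term to a $Z$-derivative via \eqref{hg}, integrate by parts using $\operatorname{div}Z=Q$ and $[X_i,Z]=X_i$, insert the system \eqref{equ}, rewrite with $Zu$, $Zw$ via \eqref{Zueq}, and control the resulting remainders using the Hardy inequality \eqref{hardy} together with $|Zu|\le\rho\psi^{-1/2}|Xu|$. The only organizational difference is that the paper packages the integration-by-parts identity as Lemma \ref{iden} and treats $I_1$, $I_2$, $I_3$ separately, while you carry out the same computations in one pass; the reabsorption step you sketch corresponds to the paper's bound $I_1+I_2\le 2I + (\text{terms in }H)$ in \eqref{I1I2}.
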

	
	\begin{proof}
		By recalling the definition of $I(r)$ in \eqref{I}, we denote $I_i(r)(i = 1, 2, 3)$ in the form of
		\begin{equation}\label{I-0}
			\begin{aligned}
				I(r)&=\int_{B_r}|Xu|^2(r^2-\rho^2)^{\alpha+1}+\int_{B_r}|Xw|^2(r^2-\rho^2)^{\alpha+1}\\[2mm]
				&\quad+\int_{B_r}\left(1+V\right)uw(r^2-\rho^2)^{\alpha+1}\\[2mm]
				&=:I_1(r)+I_2(r)+I_3(r).
			\end{aligned}
		\end{equation}
		Now we estimate $I_i'(r)$($i=1,2,3$) one by one.
		\begin{align}\label{1}
			I_1'(r)&=2(\alpha+1)r\int_{B_r}|Xu|^2(r^2-\rho^2)^{\alpha}\nonumber\\[2mm]
			&=\frac{2(\alpha+1)}{r}\int_{B_r}|Xu|^2(r^2-\rho^2)^{\alpha+1}\nonumber\\[2mm]
			&\quad+\frac{2(\alpha+1)}{r}\int_{B_r}|Xu|^2\rho^2(r^2-\rho^2)^{\alpha}.
		\end{align}
		To deal with the second term in the r.h.s. of (\ref{1}), we need the following lemma, which can also be deduced by using the Rellich-type identity in Lemma 2.11 in \cite{GV}:
		\begin{lemma}\label{iden}
			If $Xu\in L^2(B_r)$ and $\Delta_X u\in L^2(B_r)$, then it holds
			\begin{align}\label{con1}
				& \int_{B_r} |Xu|^2 \rho^2(r^2-\rho^2)^{\alpha} = \frac{Q-2}{2(\alpha+1)}\int_{B_r} |Xu|^2 (r^2- \rho^2)^{\alpha+1}\notag\\[2mm]
				&+ 2\int_{B_r}(Zu)^2(r^2- \rho^2)^{\alpha}\psi - \frac{1}{\alpha+1}\int_{B_r}Zu\Delta_X u(r^2-\rho^2)^{\alpha+1}.
			\end{align}
		\end{lemma}
		
		\begin{proof}
			By \eqref{hg}, we can get the following identity,
			\begin{equation}\label{fact}
				Z(r^2-\rho^2)^{\alpha+1} = -2(\alpha+1)\rho^2 (r^2-\rho^2)^\alpha,
			\end{equation}
			which holds that
			\begin{equation}\label{351}
				\int_{B_r} |Xu|^2 \rho^2 (r^2-\rho^2)^\alpha = -\frac{1}{2(\alpha+1)}\int_{B_r}|Xu|^2 Z(r^2-\rho^2)^{\alpha+1}.
			\end{equation}
			Using integration by parts, we obtain
			\begin{align}\label{352}
				\int_{B_r}|Xu|^2 Z(r^2-\rho^2)^{\alpha+1} &= -Q\int_{B_r}|Xu|^2 (r^2-\rho^2)^{\alpha+1}\notag\\
				&\quad-2\sum_{i=1}^{m+n}\int_{B_r}(X_i u) (ZX_i u) (r^2-\rho^2)^{\alpha+1}.
			\end{align}
			Thanks to \eqref{divZ} and the integration by parts, we have
			\begin{align}\label{353}
				&\sum_{i=1}^{m+n}\int_{B_r}(X_i u) (ZX_i u) (r^2-\rho^2)^{\alpha+1}\notag\\[2mm]
				&= \sum_{i=1}^{m+n}\int_{B_r}(X_i u) (X_i Zu) (r^2-\rho^2)^{\alpha+1} - \int_{B_r}|Xu|^2 (r^2-\rho^2)^{\alpha+1}\notag\\[2mm]
				&= -\int_{B_r}Zu\Delta_X u(r^2-\rho^2)^{\alpha+1} + 2(\alpha+1)\int_{B_r}Zu\langle X\rho , Xu\rangle \rho(r^2-\rho^2)^\alpha\notag\\[2mm]
				&\quad- \int_{B_r}|Xu|^2 (r^2-\rho^2)^{\alpha+1}.
			\end{align}
			By the fact \eqref{Zueq}, we can calculate
			\begin{equation}\label{354}
				\int_{B_r}Zu\langle X\rho , Xu\rangle\rho(r^2-\rho^2)^\alpha = \int_{B_r} (Zu)^2 (r^2-\rho^2)^\alpha\psi.
			\end{equation}
			Hence, substituting \eqref{352}, \eqref{353}, \eqref{354} into \eqref{351}, we finally come by \eqref{con1}.  
		\end{proof}
		
		Now come back to the proof of Lemma \ref{I'r}.
		
		Recalling the definition of $I_1(r)$ in (\ref{I-0}) and the first equation in (\ref{equ}), combining Lemma \ref{iden} with (\ref{1}), we obtain
		\begin{align}\label{I'1}
			I'_1(r)=&\frac{2\alpha+Q}{r}I_1(r)+\frac{4(\alpha+1)}{r}\int_{B_r}(Zu)^2(r^2-\rho^2)^{\alpha}\psi\nonumber\\[2mm]
			&\quad-\frac{2}{r}\int_{B_r} wZu(r^2-\rho^2)^{\alpha+1}\nonumber\\[2mm]
			& =: \frac{2\alpha+Q}{r}I_1(r)+\frac{4(\alpha+1)}{r}\int_{B_r}(Zu)^2(r^2-\rho^2)^{\alpha}\psi\nonumber\\[2mm]
			&\quad+ R_1.
		\end{align}
		Similarly, using Lemma \ref{iden}, we can calculate $I'_2(r)$ as
		\begin{align}\label{I'21}
			I_2'(r)&=2(\alpha+1)r\int_{B_r}|Xw|^2(r^2-\rho^2)^{\alpha}\nonumber\\[2mm]
			&=\frac{2(\alpha+1)}{r}\int_{B_r}|Xw|^2(r^2-\rho^2)^{\alpha+1}+\frac{2(\alpha+1)}{r}\int_{B_r}|Xw|^2\rho^2(r^2-\rho^2)^{\alpha}\nonumber\\[2mm]
			&=\frac{2(\alpha+1)}{r}\int_{B_r}|Xw|^2(r^2-\rho^2)^{\alpha+1}+\frac{Q-2}{r}\int_{B_r} |Xw|^2 (r^2- \rho^2)^{\alpha+1}\notag\\[2mm]
			&\quad+ \frac{4(\alpha+1)}{r}\int_{B_r}(Zw)^2(r^2- \rho^2)^{\alpha}\psi - \frac{2}{r}\int_{B_r}Zw\Delta_X w(r^2-\rho^2)^{\alpha+1},
		\end{align}
		which implies that
		\begin{align*}
			I_2'(r)&=\frac{2\alpha+Q}{r}I_2(r)+\frac{4(\alpha+1)}{r}\int_{B_r}(Zw)^2(r^2-\rho^2)^{\alpha}\psi\nonumber\\[2mm]
			&\quad -\frac{2}{r}\int_{B_r} VuZw(r^2-\rho^2)^{\alpha+1}\nonumber\\[2mm]
			& =: \frac{2\alpha+Q}{r}I_2(r)+\frac{4(\alpha+1)}{r}\int_{B_r}(Zw)^2(r^2-\rho^2)^{\alpha}\psi\nonumber\\[2mm]
			&\quad + R_2.
		\end{align*}
		
		For
		\begin{align*}
			I_3(r) = \int_{B_r}\left(1+V\right)uw(r^2-\rho^2)^{\alpha+1},
		\end{align*}
		by integrating by parts, we obtain
		\begin{align*}
			I'_3(r) &= 2(\alpha+1)r\int_{B_r}(1+V)uw(r^2-\rho^2)^{\alpha}\notag\\[2mm]
			& = \frac{2(\alpha+1)}{r}\int_{B_r}(1+V)uw(r^2-\rho^2)^{\alpha+1}\notag\\[2mm]
			&\quad + \frac{2(\alpha+1)}{r}\int_{B_r}(1+V)uw\rho^2(r^2-\rho^2)^{\alpha}\notag\\[2mm]
			& = \frac{2(\alpha+1)}{r}\int_{B_r}(1+V)uw(r^2-\rho^2)^{\alpha+1}\notag\\[2mm]
			&\quad + \frac{1}{r}\int_{B_r}\mbox{div}\left(\left(1+V\right)uwZ\right)(r^2-\rho^2)^{\alpha+1}\notag\\[2mm]
			& = \frac{2(\alpha+1)}{r}I_3(r) + \frac{Q}{r}I_3(r)\notag\\[2mm]
			&\quad + \frac{1}{r}\int_{B_r}uZw(r^2-\rho^2)^{\alpha+1} + \frac{1}{r}\int_{B_r}VuZw(r^2-\rho^2)^{\alpha+1}\notag\\[2mm]
			& \quad+ \frac{1}{r}\int_{B_r}wZu(r^2-\rho^2)^{\alpha+1} + \frac{1}{r}\int_{B_r}VwZu(r^2-\rho^2)^{\alpha+1}\notag\\[2mm]
			&\quad + \frac{1}{r}\int_{B_r}uwZV(r^2-\rho^2)^{\alpha+1}\notag\\[2mm]
			& =: \frac{2\alpha+Q}{r}I_3(r) + \frac{2}{r}I_3(r)\notag\\[2mm]
			&\quad + R_3^1+R_3^2+R_3^3+R_3^4+R_3^5.
		\end{align*}
		Now summing the $I'_i(r)$ ($i=1,2,3$), we finally get that
		\begin{align*}
			I'(r) &= \frac{2\alpha+Q}{r}I(r) + \frac{4(\alpha+1)}{r}\int_{B_r}(Zu)^2(r^2-\rho^2)^{\alpha}\psi\\[2mm]
			&\quad + \frac{4(\alpha+1)}{r}\int_{B_r}(Zw)^2(r^2-\rho^2)^{\alpha}\psi + \frac{2}{r}I_3(r)\\[2mm]
			&\quad + R_1+R_2\\[2mm]
			&\quad + R_3^1+R_3^2+R_3^3+R_3^4+R_3^5.
		\end{align*}
		Next we start to estimate other bad terms such as $\frac{2}{r}I_3(r)$, $R_1$, $R_2$, $R_3^1$, $\cdots$, $R_3^5$. In order to estimate the terms with $Zu$ or $Zw$, we show the following facts
		\begin{equation}\label{Zu}
			\begin{aligned}
				|Zu|  = \frac{\rho}{\psi}|\langle Xu,X\rho \rangle| = \rho \psi^{-\frac{1}{2}}\frac{|\langle Xu,X\rho \rangle|}{|X\rho|} 
				\leq  \rho \psi^{-\frac{1}{2}}|Xu|,
			\end{aligned}
		\end{equation}
		by using (\ref{Zueq}).
		For
		\[
		R_1 = -\frac{2}{r}\int_{B_r} wZu(r^2-\rho^2)^{\alpha+1},
		\]
		by recalling the Hardy inequality in (\ref{hardy}) in Lemma \ref{Hardy}, (\ref{I'1}) and (\ref{Zu}),
		\begin{align*}
			\left|R_1\right|&\leq\frac{2}{r}\int_{B_r}|wZu|(r^2-\rho^2)^{\alpha+1}\notag\\[2mm]
			&\leq2\int_{B_r}\frac{\rho}{r}\psi^{-\frac{1}{2}}|Xu||w|(r^2-\rho^2)^{\alpha+1}\notag\\[2mm]
			&\leq2r\int_{B_r}\frac{|Xu||w|}{\rho\psi^{\frac{1}{2}}}(r^2-\rho^2)^{\alpha+1}\notag\\[2mm]
			&\leq r\int_{B_r}|Xu|^2(r^2-\rho^2)^{\alpha+1} + r\int_{B_r}\frac{w^2}{\rho^2\psi}(r^2-\rho^2)^{\alpha+1}\notag\\[2mm]
			&\leq \max\left\{1, \left(\frac{2}{m-2}\right)^2\right\} r\left(\int_{B_r}|Xu|^2(r^2-\rho^2)^{\alpha+1} + \int_{B_r}|Xw|^2(r^2-\rho^2)^{\alpha+1}\right)\notag\\[2mm]
			&\quad + \frac{4}{m-2} (\alpha+1)r\int_{B_r}w^2(r^2-\rho^2)^{\alpha}\psi\notag\\[2mm]
			&\leq \max\left\{1, \left(\frac{2}{m-2}\right)^2\right\}r\left(I_1(r)+I_2(r)\right) + \frac{4}{m-2}(\alpha+1)r H(r).
		\end{align*}
		Similarly, for $R_2$, applying (\ref{vasump}), we obtain
		\begin{align*}
			\left|R_2\right|&\leq\frac{2}{r}\|V\|_{L^\infty}\int_{B_r} uZw(r^2-\rho^2)^{\alpha+1}\notag\\[2mm]
			&\leq\frac{2}{r}\|V\|_{L^\infty}\int_{B_r} uZw(r^2-\rho^2)^{\alpha+1}\notag\\[2mm]
			&\leq \|V\|_{L^\infty}r\int_{B_r}|Xw|^2(r^2-\rho^2)^{\alpha+1} + \|V\|_{L^\infty}r\int_{B_r}\frac{u^2}{\rho^2\psi}(r^2-\rho^2)^{\alpha+1}\notag\\[2mm]
			&\leq \max\left\{1, \left(\frac{2}{m-2}\right)^2\right\}\|V\|_{L^\infty}r\left(\int_{B_r}|Xw|^2(r^2-\rho^2)^{\alpha+1} + \int_{B_r}|Xu|^2(r^2-\rho^2)^{\alpha+1}\right)\notag\\[2mm]
			&\quad+ \frac{4}{m-2}\|V\|_{L^\infty}(\alpha+1)r\int_{B_r}u^2(r^2-\rho^2)^{\alpha}\psi\notag\\[2mm]
			&\leq \max\left\{1, \left(\frac{2}{m-2}\right)^2\right\}\|V\|_{L^\infty}r\left(I_1(r)+I_2(r)\right)+\frac{4}{m-2}\|V\|_{L^\infty}(\alpha+1)r H(r).
		\end{align*}
		For $R_3^1$, $R_3^2$, $R_3^3$ and $R_3^4$, using the similar method again, with \eqref{vasump} in hand, we have
		\begin{align*}
			\left|R_3^1+R_3^3\right|\leq \max\left\{1, \left(\frac{2}{m-2}\right)^2\right\}r\left(I_1(r)+I_2(r)\right) + \frac{4}{m-2}(\alpha+1)r H(r),
		\end{align*}
		\begin{align*}
			\left|R_3^2+R_3^4\right|\leq \max\left\{1, \left(\frac{2}{m-2}\right)^2\right\}\|V\|_{L^\infty}r\left(I_1(r)+I_2(r)\right) + \frac{4}{m-2}\|V\|_{L^\infty}(\alpha+1)r H(r).
		\end{align*}
		And for $R_3^5$,
		by the definition of $H(r)$ in (\ref{H}) and Young's inequality, one has
		\begin{align*}
			\left|R_3^5\right|&\leq\frac{\|ZV\|_{L^\infty}}{2r}\int_{B_r}(u^2+w^2)(r^2-\rho^2)^{\alpha+1}\psi\notag\\[2mm]
			&\leq\frac{\|ZV\|_{L^\infty}}{2}r\int_{B_r}(u^2+w^2)(r^2-\rho^2)^{\alpha}\psi\notag\\[2mm]
			&=\frac{1}{2}\|ZV\|_{L^\infty}rH(r).
		\end{align*}
		By the definition of $I_3(r)$ in \eqref{I},
		using (\ref{hardy}) and Young's inequality, we can calculate
		\begin{align*}
			\left|\frac{2}{r}I_3(r)\right|&\leq\frac{2}{r}\int_{B_r}\left(1+\|V\|_{L^\infty}\right)|uw|(r^2-\rho^2)^{\alpha+1}\notag\\[2mm]
			&\leq\frac{2}{r}\left(1+\|V\|_{L^\infty}\right)\left(\epsilon\int_{B_r}\frac{u^2}{\psi}(r^2-\rho^2)^{\alpha+1} + \frac{1}{4\epsilon}\int_{B_r}w^2(r^2-\rho^2)^{\alpha+1}\psi\right)\notag\\[2mm]
			&\leq2\left(1+\|V\|_{L^\infty}\right)r\left(\epsilon\int_{B_r}\frac{u^2}{\rho^2\psi}(r^2-\rho^2)^{\alpha+1} + \frac{1}{4\epsilon}\int_{B_r}w^2(r^2-\rho^2)^{\alpha}\psi\right)\notag\\[2mm]
			&\leq2\left(1+\|V\|_{L^\infty}\right)r\left(\epsilon\left(\frac{2}{m-2}\right)^2\int_{B_r}|Xu|^2(r^2-\rho^2)^{\alpha+1}\right.\notag\\[2mm]
			&\quad +\epsilon\frac{4(\alpha+1)}{m-2}\int_{B_r}u^2(r^2-\rho^2)^{\alpha}\psi + \left.\frac{1}{4\epsilon}\int_{B_r}w^2(r^2-\rho^2)^{\alpha}\psi\right)\notag\\[2mm]
			&\leq 2\left(\frac{2}{m-2}\right)^2 \epsilon \left(1+\|V\|_{L^\infty}\right) r I_1(r) + 2 \max\left\{\frac{4\epsilon}{m-2}, \frac{1}{4\epsilon}\right\} \left(1+\|V\|_{L^\infty}\right) (\alpha+1)r H(r),
		\end{align*}
		which implies
		\begin{equation*}
			\left|I_3(r)\right|\leq \left(\frac{2}{m-2}\right)^2 \epsilon \left(1+\|V\|_{L^\infty}\right)r^2 I_1(r) + \max\left\{\frac{4\epsilon}{m-2}, \frac{1}{4\epsilon}\right\} \left(1+\|V\|_{L^\infty}\right) (\alpha+1)r^2 H(r).
		\end{equation*}
		Thus we have
		\begin{align*}
			&I_1+I_2 = I-I_3 \\[2mm]
			&\leq I + \left(\frac{2}{m-2}\right)^2 \epsilon \left(1+\|V\|_{L^\infty}\right) r^2 (I_1 + I_2) +  \max\left\{\frac{4\epsilon}{m-2}, \frac{1}{4\epsilon}\right\} \left(1+\|V\|_{L^\infty}\right) (\alpha+1)r^2 H.
		\end{align*}
		Choosing $\epsilon = \frac{(m-2)^2}{8\left(1+\|V\|_{L^\infty}\right)}$, it yields
		\begin{equation}\label{I1I2}
			I_1+I_2 \leq 2I + (m-2) \max\left\{1, \frac{4}{(m-2)^3}\right\} \left(1+\|V\|_{L^\infty}\right)^2(\alpha+1)r^2 H(r).
		\end{equation}
		Finally by (\ref{I1I2}) we get 
		\begin{align*}
			&\quad \left|R_1 + R_2 + R_3^1 +R_3^2 + R_3^3 +R_3^4\right|\\[2mm]
			&\leq 2(m-2)M_1 M_2 \left(1+\|V\|_{L^\infty}\right)^3 (\alpha +1)rH(r) + 4 M_1 \left(1+\|V\|_{L^\infty}\right)r I(r),
		\end{align*}
		and
		\begin{align*}
			\left|R_3^5\right| \leq \frac{1}{2}\|ZV\|_{L^\infty}rH(r),
		\end{align*}
		where $M_1 := \max\left\{1, \left(\frac{2}{m-2}\right)^2\right\}$, $M_2 := \max\left\{1, \frac{4}{(m-2)^3}\right\}$.
		For $\left|\frac{2}{r}I_3(r)\right|$, recalling (\ref{vasump}),
		\begin{align*}
			\left|\frac{2}{r}I_3(r)\right| \leq 2(m-2)M_2 \left(1+\|V\|_{L^\infty}\right)^2 (\alpha+1)rH(r) + 2rI(r).
		\end{align*}
		Hence we conclude that
		\begin{align*}
			I'(r)&\geq \frac{2\alpha + Q }{r} I(r) + \frac{4(\alpha+1)}{r} \int_{B_r} (Zu)^2(r^2-\rho^2)^{\alpha} \psi + \frac{4(\alpha+1)}{r} \int_{B_r} (Zw)^2(r^2-\rho^2)^{\alpha} \psi\\
			&\quad - C_1 \|V\|_{L^\infty}^3 (\alpha+1)rH(r) - \frac{1}{2}\|ZV\|_{L^\infty}rH(r) -C_2 \|V\|_{L^\infty}r I(r),
		\end{align*}
		where $C_1 : = 32(m-2)M_1M_2$, $C_2:= 8M_1+2$ only depending on $m$.
	\end{proof}
	
	Now thanks to Lemma \ref{I'r}, we can provide the almost monotonicity of frequency function $N(r)$.
	
	\begin{proof}[Proof of Theorem \ref{T:mono}]
		Recalling the definition of $N(r)$ in (\ref{N}), we can estimate
		\begin{align}\label{N'}
			N'(r) &= \frac{I'(r)H(r)-I(r)H'(r)}{H^2(r)}\notag\\[2mm]
			& \geq \frac{1}{H^2(r)}\left\lbrace\left[\frac{4(\alpha+1)}{r}\int_{B_r}(Zu)^2(r^2-\rho^2)^\alpha \psi + \frac{4(\alpha+1)}{r}\int_{B_r}(Zw)^2(r^2-\rho^2)^\alpha \psi \right]\cdot H(r) \right.\notag\\[2mm]
			& \quad \left.- \frac{1}{(\alpha+1)r}I^2(r) - C_1 \|V\|_{L^\infty}^3 (\alpha+1)rH^2(r) - \frac{1}{2}\|ZV\|_{L^\infty}rH^2(r) - C_2 \|V\|_{L^\infty}r I(r)H(r)\right\rbrace\notag\\[2mm]
			& \geq \frac{1}{H^2(r)}\left\lbrace - C_1 \|V\|_{L^\infty}^3 (\alpha+1)rH^2(r) -\frac{1}{2}\|ZV\|_{L^\infty}rH^2(r) -  C_2 \|V\|_{L^\infty}r I(r)H(r)\right\rbrace\notag\\[2mm]
			& = - C_1 \|V\|_{L^\infty}^3 (\alpha+1)r - \frac{1}{2}\|ZV\|_{L^\infty}r - C_2 \|V\|_{L^\infty}r N(r),
		\end{align}
		where in the last inequality, we have used the Cauchy-Schwarz inequality
		\begin{align*}
			\frac{4(\alpha+1)}{r}\left[\int_{B_r}(Zu)^2(r^2-\rho^2)^\alpha \psi + \int_{B_r}(Zw)^2(r^2-\rho^2)^\alpha \psi \right]\cdot H(r) - \frac{1}{(\alpha+1)r}I^2(r) \geq 0,
		\end{align*}
		in view of the definition of $H(r)$ in (\ref{H}) and Lemma \ref{Iequal}.
		Thus the inequality (\ref{N'}) implies that
		\[
		r \mapsto e^{C_3\|V\|_{L^\infty}r^2}\left(N(r)+C_4\|V\|_{L^\infty}^2(\alpha+1)+\frac{1}{4C_3}\frac{\|ZV\|_{L^\infty}}{\|V\|_{L^\infty}}\right)
		\]
		is nondecreasing, where $C_3 = C_2/2$, $C_4 = C_1/C_2$.
	\end{proof}
	
	\section{Hadamard's three-ball theorem}\label{three}
	
	In this section, we will give a $L^2$-version of Hadamard's three-ball theorem. In order to get rid of the weight function $(r^2-\rho^2)^\alpha$, we introduce the height function without weight. Let
	\begin{equation}\label{h}
		h(r) = \int_{B_r}(u^2+w^2)\psi,
	\end{equation}
	then it is easy to verify that
	\begin{equation}\label{hge}
		H(r) \leq r^{2\alpha} h(r),
	\end{equation}
	and
	\begin{equation}\label{hle}
		h(r) \leq \frac{H(s)}{(s^2-r^2)^\alpha}
	\end{equation}
	for any $0<r<s<1$. We can get the following three-ball theorem.
	
	\begin{theorem}\label{tb}
		Let $0<r_1<r_2<2r_2<r_3<1$, then there exist positive numbers $C_5 = C_5(m)$ and $C_6 = C_6(m,\|ZV\|_{L^\infty})$ such that
		\begin{equation}\label{3b}
			h(r_2) \leq \left(\frac{r_3}{2r_2}\right)^{C_6\|V\|_{L^\infty}^2}h(r_3)^{\frac{\beta_0}{\alpha_0+\beta_0}}h(r_1)^{\frac{\alpha_0}{\alpha_0+\beta_0}},
		\end{equation}
		where
		\[
		\alpha_0 = \log\left( \frac{r_3}{2r_2} \right), \ \ \beta_0 = C_5^{2\|V\|_{L^\infty}}\log\left(\frac{2r_2}{r_1}\right).
		\]
	\end{theorem}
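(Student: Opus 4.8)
The plan is to differentiate $\log H$, feed the almost-monotonicity of the frequency into the resulting identity so as to obtain an almost log-convexity of $H$ on a logarithmic scale, and then to transfer the inequality from $H$ to the unweighted height $h$ by the elementary comparisons \eqref{hge}--\eqref{hle}. By Lemma \ref{H1},
\[
(\log H)'(r)=\frac{2\alpha+Q}{r}+\frac{1}{(\alpha+1)r}\,N(r),
\]
so, writing $t=\log r$ and $g(t)=\log H(e^{t})$, one has $g'(t)=(2\alpha+Q)+\frac{1}{\alpha+1}N(e^{t})$. In order to apply Theorem \ref{T:mono} I first record that $N$ is bounded below: from \eqref{I1I2} together with $I_1,I_2\ge0$, $r<1$ and $\|V\|_{L^\infty}\ge1$ one gets $I(r)\ge-C'\|V\|_{L^\infty}^{2}(\alpha+1)H(r)$, hence $N(r)+A\ge0$ on $(0,1)$ for
\[
A:=C'\|V\|_{L^\infty}^{2}(\alpha+1)+\frac{1}{4C}\frac{\|ZV\|_{L^\infty}}{\|V\|_{L^\infty}},
\]
which is precisely the shift appearing in Theorem \ref{T:mono}.

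By that theorem, $r\mapsto e^{C\|V\|_{L^\infty}r^{2}}\big(N(r)+A\big)$ is non-decreasing, so (using $0<r,s<1$)
\[
N(r)+A\le e^{C\|V\|_{L^\infty}}\big(N(s)+A\big)\qquad\text{for all }0<r\le s<1 .
\]
Fix $r_1<r_2<2r_2<r_3<1$, write $\bar r=2r_2$, $t_1=\log r_1$, $\bar t=\log(2r_2)$, $t_3=\log r_3$ and $B:=(2\alpha+Q)-\frac{A}{\alpha+1}$, so that $g'(t)=B+\frac{1}{\alpha+1}\big(N(e^{t})+A\big)$. Integrating $g'$ over $[t_1,\bar t]$ and over $[\bar t,t_3]$ and using the last display (with $s=e^{\bar t}$ on $[t_1,\bar t]$, and in the form $N(e^{\bar t})+A\le e^{C\|V\|_{L^\infty}}\big(N(e^{t})+A\big)$ on $[\bar t,t_3]$) gives
\[
\frac{g(\bar t)-g(t_1)}{\bar t-t_1}-B\le \frac{e^{C\|V\|_{L^\infty}}}{\alpha+1}\big(N(e^{\bar t})+A\big),\qquad \frac{g(t_3)-g(\bar t)}{t_3-\bar t}-B\ge \frac{e^{-C\|V\|_{L^\infty}}}{\alpha+1}\big(N(e^{\bar t})+A\big).
\]
Since $N(e^{\bar t})+A\ge0$, eliminating this quantity between the two inequalities yields
\[
\frac{g(\bar t)-g(t_1)}{\bar t-t_1}-B\le e^{2C\|V\|_{L^\infty}}\Big(\frac{g(t_3)-g(\bar t)}{t_3-\bar t}-B\Big).
\]
Putting $C_5:=e^{C}$, $K_0:=C_5^{2\|V\|_{L^\infty}}=e^{2C\|V\|_{L^\infty}}$, and noting $\alpha_0=\log\frac{r_3}{2r_2}=t_3-\bar t$, $\beta_0=K_0\log\frac{2r_2}{r_1}=K_0(\bar t-t_1)$, a short algebraic rearrangement of the previous display gives
\[
\log H(2r_2)\le \frac{\alpha_0}{\alpha_0+\beta_0}\log H(r_1)+\frac{\beta_0}{\alpha_0+\beta_0}\log H(r_3)-\frac{(K_0-1)B\,\alpha_0\beta_0}{K_0(\alpha_0+\beta_0)} .
\]

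It remains to pass from $H$ to $h$. By \eqref{hle} with $s=2r_2$ we have $\log h(r_2)\le \log H(2r_2)-\alpha\log(3r_2^{2})$, while \eqref{hge} gives $\log H(r_i)\le 2\alpha\log r_i+\log h(r_i)$ for $i=1,3$. Inserting these three estimates into the displayed convexity inequality and collecting terms, the $\alpha$-linear pieces produced by the two conversions combine with the $2\alpha$-part of $B$ in the convexity defect; using $\frac{\alpha_0\beta_0}{\alpha_0+\beta_0}\le\alpha_0$, $\|V\|_{L^\infty},\|ZV\|_{L^\infty}\ge1$ and $\frac{A}{\alpha+1}\le C'\|V\|_{L^\infty}^{2}+\frac{\|ZV\|_{L^\infty}}{4C}$, the remaining error is bounded by $C_6\|V\|_{L^\infty}^{2}\log\frac{r_3}{2r_2}$ with $C_6=C_6(m,\|ZV\|_{L^\infty})$, which is exactly \eqref{3b}.

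I expect the last bookkeeping step to be the main obstacle: one has to write out the convexity defect $B\,\frac{\alpha_0\beta_0}{\alpha_0+\beta_0}$ and the exponents $\frac{\alpha_0}{\alpha_0+\beta_0}$, $\frac{\beta_0}{\alpha_0+\beta_0}$ against the logarithmic weights coming from \eqref{hge}--\eqref{hle} and verify that everything collapses into the single clean term $C_6\|V\|_{L^\infty}^{2}\log\frac{r_3}{2r_2}$. Two structural facts make the argument possible and deserve emphasis: the lower bound $N+A\ge0$, without which the two one-sided monotonicity estimates could not be chained into the single two-sided inequality above; and the fact that the exponent $C\|V\|_{L^\infty}r^{2}$ in Theorem \ref{T:mono} is \emph{linear} in $\|V\|_{L^\infty}$, which is what becomes the factor $C_5^{2\|V\|_{L^\infty}}$ in the definition of $\beta_0$.
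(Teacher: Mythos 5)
Your overall strategy is exactly the paper's: integrate $\frac{d}{dr}\log\!\big(H(r)/r^{2\alpha+Q}\big)=\frac{N(r)}{(\alpha+1)r}$ separately over $[r_1,2r_2]$ and $[2r_2,r_3]$, use the almost-monotonicity of Theorem~\ref{T:mono} on each interval (with the middle radius $2r_2$ as the reference point), chain the two one-sided inequalities through the common quantity $N(2r_2)+A$, and then pass from $H$ to $h$ via \eqref{hge}--\eqref{hle}. Your choice to make the lower bound $N(r)+A\ge 0$ explicit (derived from \eqref{I1I2} together with $I_1,I_2\ge 0$, $r<1$ and $\|V\|_{L^\infty}\ge 1$) is a genuine improvement in exposition: the paper uses this silently in the second inequality of \eqref{Nrs}, and you are right that without it the two one-sided estimates could not be chained. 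The observation that the \emph{linearity} of the exponent $C\|V\|_{L^\infty}r^2$ in $\|V\|_{L^\infty}$ is what produces $C_5^{2\|V\|_{L^\infty}}$ is also correct and matches the paper's $C_5=e^{C_3}$.

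However, the final ``bookkeeping'' step, which you flag as the main risk, does not in fact collapse the way you assert. You claim that the $\alpha$-linear pieces produced by $\log h(r_2)\le \log H(2r_2)-\alpha\log(3r_2^2)$ and $\log H(r_i)\le 2\alpha\log r_i+\log h(r_i)$ cancel against the $2\alpha$-part of the convexity defect $-(1-\tfrac{1}{K_0})B\,\frac{\alpha_0\beta_0}{\alpha_0+\beta_0}$. A direct computation shows they do not cancel. Set $a=\log r_1$, $b=\log(2r_2)$, $c=\log r_3$, $p=c-b=\alpha_0$, $q=b-a$, $\kappa=K_0=C_5^{2\|V\|_{L^\infty}}$, so $\beta_0=\kappa q$. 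Then
\begin{equation*}
-\alpha\log(3r_2^2)=\alpha\Big(\log\tfrac{4}{3}-2b\Big),\qquad
\tfrac{2\alpha_0}{\alpha_0+\beta_0}\,2\alpha a+\tfrac{2\beta_0}{\alpha_0+\beta_0}\,2\alpha c \;\;\text{evaluates to}\;\; 2\alpha\Big(b+\tfrac{pq(\kappa-1)}{p+\kappa q}\Big),
\end{equation*}
while the $2\alpha$-part of the defect contributes $-2\alpha(1-\tfrac{1}{\kappa})\tfrac{p\kappa q}{p+\kappa q}=-2\alpha\tfrac{(\kappa-1)pq}{p+\kappa q}$. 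Summing, the $b$-terms cancel and the two $\frac{(\kappa-1)pq}{p+\kappa q}$-terms cancel, but you are left with a residual $+\alpha\log\tfrac{4}{3}$. Equivalently, passing from $H$ to $h$ forces on you a multiplicative factor $(4/3)^\alpha$ coming from $(2r_2)^{2\alpha}/(3r_2^2)^\alpha$, and this does not vanish and cannot be absorbed into $\big(\tfrac{r_3}{2r_2}\big)^{C_6\|V\|_{L^\infty}^2}$, since the latter can be arbitrarily close to $1$ while $(4/3)^\alpha$ is a fixed constant $>1$ (note $\alpha$ must be kept bounded, depending on $m$ and $\|ZV\|_{L^\infty}$, to control $\tfrac{\|ZV\|_{L^\infty}}{K_1(\alpha+1)}$). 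So the step ``everything collapses into the single clean term $C_6\|V\|_{L^\infty}^2\log\tfrac{r_3}{2r_2}$'' is not justified; the inequality you actually obtain carries an additional constant prefactor of the form $(4/3)^\alpha$. This is the same loose point as in the paper, which drops the negative defect term, ``lets $\alpha$ be sufficiently large'', and then applies \eqref{hge}--\eqref{hle} without tracking the $\alpha$-power mismatch; it is harmless for the subsequent application (where $r_3/(2r_2)=3/2$ is fixed), but it means \eqref{3b} as stated is only obtained up to such a prefactor, and your proof should either carry that factor explicitly or explain why it can be suppressed.
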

	\begin{proof}
		The almost monotonicity of $N(r)$ holds directly
		\begin{equation*}
			\begin{aligned}
				e^{C_3K_1r^2}\left(N(r)+C_4K_1^2(\alpha+1)+\frac{1}{4C_3}\frac{K_2}{K_1}\right)\leq e^{C_3K_1s^2}\left(N(s)+C_4K_1^2(\alpha+1)+\frac{1}{4C_3}\frac{K_2}{K_1}\right),
			\end{aligned}
		\end{equation*}
		for any $0<r<s<1$, which implies
		\begin{equation}\label{Nrs}
			\begin{aligned}
				N(r) &\leq \frac{e^{C_3 K_1s^2}}{e^{C_3 K_1 r^2}}\left(N(s) + C_4 K_1^2 (\alpha+1)+\frac{1}{4C_3}\frac{K_2}{K_1} \right)\\[2mm]
				&\leq C_5^{K_1} \left(N(s) + C_4 K_1^2 (\alpha+1)+\frac{1}{4C_3}\frac{K_2}{K_1} \right),\ \ \ \ \ \ \ \ 0<r<s<1,
			\end{aligned}
		\end{equation}
		where $C_5 = e^{C_3}$.
		By the first variation formula of $H(r)$ in Lemma \ref{H1}, one has
		\begin{equation}\label{dr}
			\frac{d}{dr} \log\left(\frac{H(r)}{r^{2 \alpha+ Q }}\right) = \frac{1}{(\alpha +1)r} N(r),\ \ \ \ 0<r<1.
		\end{equation}
		Integrating (\ref{dr}) between $r_1$ and $2r_2$, using (\ref{Nrs}), we obtain
		\begin{equation*}
			\begin{aligned}
				&\quad \log \frac{H(2r_2)}{H(r_1)} - (2\alpha + Q ) \log \left(\frac{2r_2}{r_1}\right) = \frac{1}{(\alpha +1)} \int_{r_1}^{2r_2} \frac{1}{r} N(r) dr \\[2mm]
				&\leq \frac{C_5^{K_1}}{\alpha + 1} \left(N(2r_2) + C_4 K_1^2 (\alpha+1)+\frac{1}{4C_3}\frac{K_2}{K_1}\right) \int_{r_1}^{2r_2} \frac{1}{r} dr,
			\end{aligned}
		\end{equation*}
		which implies
		\begin{equation}\label{12}
			\frac{\log \frac{H(2r_2)}{H(r_1)}}{\log \left(\frac{2r_2}{r_1}\right)} - (2\alpha + Q )  \le   \frac{C_5^{K_1}}{\alpha + 1} \left(N(2r_2) + C_4 K_1^2 (\alpha+1)+\frac{1}{4C_3}\frac{K_2}{K_1}\right).
		\end{equation}
		Next we use (\ref{Nrs}) again to integrate (\ref{dr}) between $2r_2$ and $r_3$, similarly we find
		\begin{equation}\label{21}
			\begin{aligned}
				\frac{C_5^{K_1}}{\alpha + 1} \left(N(2r_2) - C_5^{K_1}C_4 K_1^2 (\alpha+1) - \frac{C_5^{K_1}}{4C_3}\frac{K_2}{K_1}\right) \le C_5^{2K_1} \left[\frac{\log \frac{H(r_3)}{H(2r_2)}}{\log\left(\frac{r_3}{2r_2}\right)}  - (2\alpha + Q )\right].
			\end{aligned}
		\end{equation}
		Thus by (\ref{12}) and (\ref{21}), we conclude that
		\[
		\frac{\log \frac{H(2r_2)}{H(r_1)}}{C_5^{2K_1} \log \left(\frac{2r_2}{r_1}\right)}   \le  \frac{\log \frac{H(r_3)}{H(2r_2)}}{
			\log\left(\frac{r_3}{2r_2}\right)} + 2 C_4K_1^2 + \frac{1}{\alpha+1} \frac{K_2}{2C_3K_1} - \left(1 - \frac{1}{C_5^{2K_1}}\right)(2\alpha + Q ),
		\]
		since $\frac{C_5^{K_1}+1}{C_5^{K_1}} \leq 2$.
		Now setting
		\[
		\alpha_0 = 
		\log\left(\frac{r_3}{2r_2}\right),\ \ \ \beta_0 = C_5^{2K_1} \log \left(\frac{2r_2}{r_1}\right),
		\]
		noting that $C_5 \geq 1$, we find
		\begin{equation}\label{alpha0}
			\alpha_0 \log \frac{H(2r_2)}{H(r_1)} \le \beta_0  \log \frac{H(r_3)}{H(2r_2)} +\left(  C_6K_1^2 + \frac{1}{\alpha+1} \frac{K_2}{2C_3K_1} \right) \alpha_0 \beta_0 ,
		\end{equation}
		where $C_6 = 2C_4$.
		Dividing both sides of the last inequality by $\alpha_0 + \beta_0$, we get
		\[
		\log \left(\frac{H(2r_2)}{H(r_1)}\right)^\frac{\alpha_0}{\alpha_0 + \beta_0} \le   \log \left(\frac{H(r_3)}{H(2r_2)}\right)^\frac{\beta_0}{\alpha_0 + \beta_0} + \left( C_6K_1^2 + \frac{1}{\alpha+1} \frac{ K_2}{2C_3K_1}\right) \frac{\alpha_0 \beta_0}{\alpha_0 + \beta_0},
		\]
		which gives
		\begin{equation}\label{l}
			\log H(2r_2) \leq \log\left[H(r_3)^\frac{\beta_0}{\alpha_0 + \beta_0} H(r_1)^\frac{\alpha_0}{\alpha_0 + \beta_0}\right] +\left(  C_6K_1^2 + \frac{1}{\alpha+1} \frac{K_2}{(8M_1 + 2)K_1} \right)  \alpha_0,
		\end{equation}
		since $\frac{\alpha_0 \beta_0}{\alpha_0 + \beta_0}\leq\alpha_0$.
		Exponentiating both sides of (\ref{l}), and let $\alpha$ be sufficient large, we conclude
		\begin{equation*}
			H(2r_2) \leq \left(\frac{r_3}{2r_2}\right)^{\widetilde{C_6}K_1^2} H(r_3)^\frac{\beta_0}{\alpha_0 + \beta_0} H(r_1)^\frac{\alpha_0}{\alpha_0 + \beta_0},
		\end{equation*}
		and $\widetilde{C_6}$ depends on $K_2$.
		Applying (\ref{hge}) and (\ref{hle}), it finally holds (\ref{3b}).
	\end{proof}
	
	\section{The maximal vanishing order of solutions}\label{maxi}
	In this section, we demonstrate Theorem \ref{thm}, which is the main result in this paper. The theorem tells that if $u$ is a nontrivial solution of (\ref{equ1}), then the maximal vanishing order is not larger than a finite number. Before our proving of Theorem \ref{thm}, we first provide some  a priori interior estimates for solutions of the equations (\ref{equ}), which can help us to find the relationship between $h(r)$ and $\left\|u\right\|_{L^\infty}$.
	
	First we prove the following Caccioppoli-type inequality, which has been obtained for the standard bi-Laplacian equation in \cite{L}.
	
	\begin{lemma}\label{Cacci}
		Let $u$ and $w$ satisfy the equations \eqref{equ}, then there exists a positive universal $C$ such that
		\begin{equation*}
			\left\|w\right\|_{L^2(B_r)}^2 \leq C\left(1 + \|V\|_{L^\infty} \right) r^{-4} \left\|u\right\|_{L^2(B_{2r})}^2,
		\end{equation*}
		for any $0<r<\frac{1}{2}$.
	\end{lemma}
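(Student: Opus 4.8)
The plan is to run a Caccioppoli-type integration by parts against the second equation in \eqref{equ}, using crucially that $w=\Da_X u$: the square of $w$ produced during the estimates is precisely the quantity we want to bound, so it can be absorbed into the left-hand side and no iteration over intermediate balls is needed. All computations transfer from the bi-Laplacian case of \cite{L} once one records the basic identities for the Grushin structure; in particular, since the vector fields $X_i$ in \eqref{vector1} are divergence free, the divergence theorem generates no extra lower-order terms.

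First I would fix a cut-off $\eta=\chi(\rho/r)$, where $\chi\in C^\infty(\R)$ satisfies $\chi\equiv1$ on $[0,1]$, $\chi\equiv0$ on $[2,\iy)$ and $0\le\chi\le1$; thus $\eta\equiv1$ on $B_r$ and $\eta\equiv0$ outside $B_{2r}$. Using $|X\rho|^2=\psi\le1$ together with the identity $\Da_X\rho=(Q-1)\psi/\rho$ (a consequence of $\Da_X(\rho^{2-Q})=0$), and $\rho\ge r$ on the support of $X\eta$, one checks that $|X\eta|\le C/r$ and $|\Da_X\eta|\le C/r^{2}$, with $C$ depending only on $m,n,\ba$; hence $|X(\eta^4)|\le C\eta^3 r^{-1}$ and $|\Da_X(\eta^4)|\le C\eta^2 r^{-2}$. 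Testing the first equation $\Da_X u=w$ against $u\eta^2$ and integrating by parts gives the first-order Caccioppoli identity
\[
\int_{B_{2r}}|Xu|^2\eta^2=-\int_{B_{2r}}uw\,\eta^2-2\int_{B_{2r}}u\eta\langle X\eta,Xu\rangle ,
\]
so that, by Young's inequality (absorbing $\tfrac12\int|Xu|^2\eta^2$, bounding $|X\eta|^2\le Cr^{-2}$, and keeping $\int w^2\eta^4$ since $w=\Da_X u$),
\[
\int_{B_{2r}}|Xu|^2\eta^2\le a\int_{B_{2r}}w^2\eta^4+C\bigl(a^{-1}+r^{-2}\bigr)\int_{B_{2r}}u^2,\qquad a>0 .
\]

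For the main step I would test the second equation. Since $\Da_X w=Vu$ holds (distributionally) and $u\eta^4$ is a compactly supported admissible test function — here the standing hypothesis $X_iX_j u\in L^2_{loc}(B_1)$ is used to guarantee $X_iX_j(u\eta^4)\in L^2$, while $w\in L^2_{loc}$ — two integrations by parts and the product rule $\Da_X(fg)=(\Da_X f)g+2\langle Xf,Xg\rangle+f\Da_X g$ give
\[
\int_{B_{2r}}Vu^2\eta^4=\int_{B_{2r}}w\,\Da_X(u\eta^4)=\int_{B_{2r}}w\bigl[\eta^4 w+2\langle Xu,X(\eta^4)\rangle+u\,\Da_X(\eta^4)\bigr],
\]
whence
\[
\int_{B_{2r}}w^2\eta^4=\int_{B_{2r}}Vu^2\eta^4-2\int_{B_{2r}}w\langle Xu,X(\eta^4)\rangle-\int_{B_{2r}}wu\,\Da_X(\eta^4).
\]
Estimating the three terms by Young's inequality — $\bigl|\int Vu^2\eta^4\bigr|\le\|V\|_{L^\iy}\int_{B_{2r}}u^2$; the second term $\le\ea\int w^2\eta^4+C\ea^{-1}r^{-2}\int_{B_{2r}}|Xu|^2\eta^2$ via $|X(\eta^4)|\le C\eta^3 r^{-1}$; the third $\le\ea\int w^2\eta^4+C\ea^{-1}r^{-4}\int_{B_{2r}}u^2$ via $|\Da_X(\eta^4)|\le C\eta^2 r^{-2}$ — then choosing $\ea$ small, inserting the first-order Caccioppoli bound for $\int|Xu|^2\eta^2$, and choosing $a$ small enough that the resulting $Cr^{-2}a\int w^2\eta^4$ is again absorbed on the left, one arrives at $\int_{B_{2r}}w^2\eta^4\le C(\|V\|_{L^\iy}+r^{-4})\int_{B_{2r}}u^2$. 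Since $r<\tfrac12$ forces $r^{-4}\ge1$, this is $\le C(1+\|V\|_{L^\iy})r^{-4}\int_{B_{2r}}u^2$, and as $\eta\equiv1$ on $B_r$ the left side dominates $\|w\|_{L^2(B_r)}^2$, which is the claim.

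The only genuinely delicate point is this final bookkeeping: the first-order Caccioppoli inequality unavoidably reproduces a multiple of $\int w^2\eta^4$ on its right-hand side, so the free parameters $\ea$ and $a$ (each depending only on $m,n,\ba$) must be kept until the very end and tuned so that every occurrence of $\int w^2\eta^4$ coming from the cross terms and from $\int|Xu|^2\eta^2$ can be moved to the left-hand side. A secondary, routine point is the justification of the two integrations by parts in the main step, which needs $w\in L^2_{loc}$ and $u\eta^4$ admissible; both are guaranteed by $X_iX_ju\in L^2_{loc}(B_1)$ and a density argument, and the divergence-free property of the $X_i$ ensures no boundary or extra zeroth-order terms appear.
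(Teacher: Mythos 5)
Your proof starts from exactly the same weak identity as the paper: test $\Delta_X w = Vu$ against $u\eta^4$, integrate by parts twice (equivalently, the paper writes this as $\int \Delta_X u\,\Delta_X(u\eta^4)=\int Vu^2\eta^4$ and expands $\Delta_X(u\eta^4)$), and absorb every occurrence of $\int w^2\eta^4$ generated by the cross terms. The only structural difference is how $\int|Xu|^2\eta^2|X\eta|^2$ is controlled. The paper integrates by parts directly in $\int_{B_{2r}}\eta^2|Xu|^2|X\eta|^2 = -\int_{B_{2r}}\mathrm{div}_X(\eta^2|X\eta|^2Xu)\,u$, keeping the $|X\eta|^2$ weight inside so that the $\frac18\int|\Delta_Xu|^2\eta^4$ produced can be absorbed with a universal constant; you instead peel off $|X\eta|^2\le Cr^{-2}$ first and then apply an unweighted first-order Caccioppoli inequality with a free parameter $a$. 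Both routes are valid, but yours has one bookkeeping error: the coefficient of $\int w^2\eta^4$ that must be absorbed is $2\epsilon + C\epsilon^{-1}r^{-2}a$, so with $\epsilon$ fixed you need $a\lesssim r^2$, contradicting your claim that $\epsilon$ and $a$ each depend only on $m,n,\beta$. Taking $a=cr^2$ (small universal $c$) repairs this at once, and the resulting $a^{-1}\sim r^{-2}$ simply folds into the $r^{-4}$ of the final bound, so the conclusion is unaffected. The remaining steps — the bounds $|X\eta|\le C/r$ and $|\Delta_X\eta|\le C/r^2$ via $|X\rho|^2=\psi\le1$ and $\Delta_X\rho=(Q-1)\psi/\rho$, the admissibility of $u\eta^4$ under $X_iX_ju\in L^2_{loc}$, the divergence-free structure of the $X_i$ suppressing boundary/lower-order terms, and the final absorption of $\|V\|_{L^\infty}$ into the $r^{-4}$ factor using $r<\tfrac12$ — are all correct and match the paper's argument.
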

	\begin{proof}
		Supposing $u$ and $w$ are solutions of the equation (\ref{equ}), then we can get
		\begin{equation*}
			\int_{B_{2r}}\Delta_{X}u\Delta_{X}\varphi = \int_{B_{2r}}Vu\varphi,
		\end{equation*}
		for any $\varphi \in L_0^2(B_{2r})$ satisfies $X\varphi \in L_0^2(B_{2r})$ and $\Delta_{X}\varphi \in L^2(B_{2r})$.
		Let $\eta \in C_0^\infty(\mathbb{R})$ be a cut-off function which satisfies
		\begin{align}
			&0\leq\eta\leq1,\notag\\[2mm]
			\eta(\rho)&\equiv1\ \ \ \ \mbox{for}\ \ \rho\leq r,\notag
		\end{align}
		and
		\begin{align}\label{eta}
			\eta(\rho) = 0 \quad \mbox{for}\ \ \rho\geq2r; \ \ \left|X\eta(\rho)\right|\leq\frac{C}{r}, \ \ \left|X^2\eta(\rho)\right|\leq\frac{C}{r^2} \ \ \ \ \mbox{for}\ \ r < \rho\leq 2r.
		\end{align}
		We can assume $C>1$.
		Taking test function $\varphi = u\eta^4$, one has
		\begin{equation*}
			\int_{B_{2r}}\Delta_{X}u\Delta_{X}(u\eta^4) = \int_{B_{2r}}Vu^2\eta^4,
		\end{equation*}
		which implies
		\begin{align*}
			\int_{B_{2r}}\left(\Delta_{X}u\right)^2\eta^4 &=  \int_{B_{2r}}Vu^2\eta^4 -4\int_{B_{2r}}\eta^3u\Delta_{X}u\Delta_{X}\eta \notag\\[2mm]
			& \quad - 12\int_{B_{2r}}\eta^2u\Delta_{X}u\left|X\eta\right|^2 -8\int_{B_{2r}}\eta^3\Delta_{X}u\langle Xu,X\eta\rangle.
		\end{align*}
		By Young's inequality,
		\begin{align}\label{Du2}
			\int_{B_{2r}}\left(\Delta_{X}u\right)^2\eta^4 &\leq \int_{B_{2r}}Vu^2\eta^4 + \epsilon\int_{B_{2r}}\left(\Delta_{X}u\right)^2\eta^4 \notag\\[2mm]
			& \quad + C(\epsilon)\left\{ \int_{B_{2r}}\eta^2u^2\left(\Delta_{X}\eta\right)^2 + \int_{B_{2r}}u^2\left|X\eta\right|^4 + \int_{B_{2r}}\eta^2\left|Xu\right|^2\left|X\eta\right|^2\right\},
		\end{align}
		and we let $\epsilon=\frac{1}{4}$.
		Next, using Young's inequality again, we estimate the last term in (\ref{Du2}).
		\begin{align*}
			\int_{B_{2r}}\eta^2\left|Xu\right|^2\left|X\eta\right|^2 &= -\int_{B_{2r}}\mbox{div}_X\left(Xu\left|X\eta\right|^2\eta^2\right)u\notag\\[2mm]
			& = -\int_{B_{2r}}\Delta_{X}u\left|X\eta\right|^2\eta^2u - 2 \sum_{j,k=1}^{m+n}\int_{B_{2r}}X_kuX_k(X_j\eta)X_j\eta\eta^2u\notag\\[2mm]
			& \quad - 2\int_{B_{2r}}\eta u\left|X\eta\right|^2 \langle Xu,X\eta\rangle\notag\\[2mm]
			& \leq \frac{1}{8}\int_{B_{2r}}\left|\Delta_Xu\right|^2\eta^4 + \frac{1}{2}\int_{B_{2r}}\left|Xu\right|^2\left|X\eta\right|^2\eta^2\notag\\[2mm]
			& \quad + C\left\{\int_{B_{2r}}u^2\left|X\eta\right|^4 + \int_{B_{2r}}\left|X^2\eta\right|^2\eta^2u^2\right\},
		\end{align*}
		which gives
		\begin{equation}\label{last}
			\int_{B_{2r}}\eta^2\left|Xu\right|^2\left|X\eta\right|^2 \leq \frac{1}{4}\int_{B_{2r}}\left|\Delta_{X}u\right|^2\eta^4 + C\left(\int_{B_{2r}}u^2\left|X\eta\right|^4 + \int_{B_{2r}}u^2\left|X^2\eta\right|^2\eta^2\right).
		\end{equation}
		Thus, combining (\ref{Du2}) and (\ref{last}), applying (\ref{eta}), we conclude
		\begin{equation*}
			\int_{B_{2r}}\left|\Delta_{X}u\right|^2\eta^4 \leq \int_{B_{2r}}Vu^2\eta^4 + Cr^{-4}\int_{B_{2r}}u^2.
		\end{equation*}
		Thanks to the equation (\ref{equ}), we obtain
		\begin{align*}
			\left\|w\right\|_{L^2(B_r)}^2 &\leq \int_{B_{2r}}w^2\eta^4  = \int_{B_{2r}}\left( \Delta_{X} u\right) ^2 \eta^4 \leq \int_{B_{2r}}Vu^2\eta^4 + Cr^{-4}\int_{B_{2r}}u^2.
		\end{align*}
		which implies the result of this lemma.
	\end{proof}
	
	Then we want to apply the Moser  iterative approach to the Baouendi-Grushin equations which are degenerate elliptic. In fact, we can see such process is  available to the Baouendi-Grushin operator, and yields the following result.
	\begin{lemma}\label{moser}
		Let $s>\frac{Q}{2}$, and $w$ bounded satisfies the equation
		\begin{equation}\label{moserr}
			\Delta_{X}w  = f,\ \ \mbox{in}\ B_1(0),
		\end{equation}
		where $f \in L^s(B_1)$,
		then $w\in L^\infty(B_{1/2})$. Moreover, there exists a positive number $\bar{C} = \bar{C}(Q,s)$ such that
		\begin{equation*}
			\left\|w\right\|_{L^\infty(B_{1/2})} \leq \bar{C }\big(\left\|w\right\|_{L^2(B_1)} + \left\|f\right\|_{L^s(B_1)}\big).
		\end{equation*}
		\begin{proof}
			For some $k>0$ and $t>0$, set $\bar{w} =  w^+ + k$ and
			\[
			\bar{w}_t =
			\begin{cases}
				\bar{w} & \text{if } w<t,\\
				k+t & \text{if } w \geq t.
			\end{cases}
			\]
			Then we have 
			\begin{align}\label{Xwm}
				X\bar{w}_t=0\quad\text{in}\ \lbrace t<0\rbrace\cup\lbrace w>t\rbrace.
			\end{align}
			Let
			\[
			\varphi = \eta^2\left(\bar{w}_t^ \zeta \bar{w}-k^{ \zeta +1}\right) \in H_0^1(B_1)
			\]
			for some $ \zeta \geq0$ and some nonnegative function $\eta\in C_0^1(B_1)$. Direct calculation implies
			\begin{align*}
				X\varphi &= \zeta \eta^2\bar{w}_t^{ \zeta -1}X\bar{w}_t\bar{w} + \eta^2\bar{w}_t^ \zeta X\bar{w} + 2\eta X\eta\left(\bar{w}_t^ \zeta \bar{w} - k^{ \zeta +1}\right)\\[2mm]
				&=\eta^2\bar{w}_t^ \zeta \left( \zeta X\bar{w}_t + X\bar{w}\right) + 2\eta X\eta\left(\bar{w}_t^ \zeta \bar{w} - k^{ \zeta +1}\right).
			\end{align*}
			Noting that $\varphi = 0$ and $X\varphi = 0$ in $\lbrace w\leq0\rbrace$, applying \eqref{Xwm} and H\"older's inequality, we have
			\begin{align}\label{Xw}
				\int \langle Xw,X\varphi\rangle &= \int \langle X\bar{w} , \zeta X\bar{w}_t + X\bar{w}\rangle \eta^2 \bar{w}_t^\zeta + 2\int \langle X\bar{w},X\eta\rangle\left(\bar{w}_t^\zeta\bar{w} - k^{\zeta+1}\right)\eta\notag\\[2mm]
				&\geq \beta \int \eta^2\bar{w}_t^\zeta |X\bar{w}_t|^2 + \int \eta^2\bar{w}_t^\zeta |X\bar{w}|^2 - 2\int |X\bar{w}| |X\eta|\bar{w}_t^\zeta \bar{w}\eta\notag\\[2mm]
				&\geq \beta \int \eta^2\bar{w}_t^\zeta |X\bar{w}_t|^2 + \frac{1}{2}\int \eta^2\bar{w}_t^\zeta |X\bar{w}|^2 - 2\int |X\eta|^2\bar{w}_t^\zeta \bar{w}^2.
			\end{align}
			Via \eqref{moserr}, we get $\int \langle Xw,X\varphi\rangle = \int (cw-f)\varphi$, hence \eqref{Xw} implies by noting $\bar{w}\geq k$
			\begin{align}\label{b}
				&\beta \int \eta^2\bar{w}_t^\zeta |X\bar{w}_t|^2 + \int \eta^2\bar{w}_t^\zeta |X\bar{w}|^2\notag\\[2mm]
				&\leq 4 \left\{\int|X\eta|^2\bar{w}_t^\zeta\bar{w}^2 +  |f|\eta^2\bar{w}_t^\zeta\bar{w} \right\}\notag\\[2mm]
				&\leq 4 \left\{\int|X\eta|^2\bar{w}_t^\zeta\bar{w}^2 + \int c_0\eta^2 \bar{w}_t^\zeta\bar{w}^2 \right\},
			\end{align}
			where $c_0$ is denoted by
			\begin{align*}
				c_0 =  \frac{|f|}{k}.
			\end{align*}
			Choose $k = \|f\|_{L^s}$ if $f$ is not identically zero. Otherwise choose arbitrary $k>0$ and eventually let $k \rightarrow 0^+$. Thus we have
			\begin{align}\label{c0}
				\|c_0\|_{L^s}\leq 1.
			\end{align}
			Set $v = \bar{w}_t^{\frac{\zeta}{2}}\bar{w}$. Note that
			\begin{align*}
				|Xv|^2 \leq (1+\zeta)\left\{\zeta \bar{w}_t^\zeta|X\bar{w}_t|^2 + \bar{w}_t^\zeta|X\bar{w}|^2\right\}.
			\end{align*}
			Therefore by \eqref{b} we have
			\begin{equation*}
				\int |Xv|^2\eta^2 \leq  \left\{(1+\zeta) \int v^2|X\eta|^2 + (1+\zeta)\int c_0v^2\eta^2\right\},
			\end{equation*}
			\begin{equation}\label{333}
				\int |X(v\eta)|^2 \leq 4 \left\{(1+\zeta) \int v^2|X\eta|^2 + (1+\zeta)\int c_0v^2\eta^2\right\}.
			\end{equation}
			H\"older's inequality and \eqref{c0} imply that
			\begin{align}\label{111}
				\int c_0v^2\eta^2 &\leq \left(\int c_0^s\right)^\frac{1}{s} \left(\int (\eta v)^\frac{2s}{s-1}\right)^{1-\frac{1}{s}}\notag\\[2mm]
				&\leq \left(\int (\eta v)^\frac{2s}{s-1}\right)^{1-\frac{1}{s}}.
			\end{align}
			Applying interpolation inequality, Young's inequality and Sobolev's inequality for Grushin-type operators(see\cite{FL}), we have
			\begin{align}\label{222}
				\|\eta v\|_{L^\frac{2s}{s-1}} &\leq \epsilon\|\eta v\|_{L^{2^*}} + C\epsilon^{-\frac{Q}{2s-Q}}\|\eta v\|_{L^2}\notag\\[2mm]
				&\leq \epsilon\|X(\eta v)\|_{L^2} + C\epsilon^{-\frac{Q}{2s-Q}}\|\eta v\|_{L^2}
			\end{align}
			for any small $\epsilon > 0$, where $2^* = \frac{2Q}{Q-2} > \frac{2s}{s-1} > 2$ since $s>\frac{Q}{2}$.
			Therefore plugging \eqref{111}, \eqref{222} into \eqref{333}, we obtain
			\begin{align*}
				\int |X(v\eta)|^2 \leq C \left\{(1+\zeta)\int v^2 |X\eta|^2 + (1+\zeta)^\frac{2s+Q}{2s-Q} \int v^2\eta^2\right\},
			\end{align*}
			which implies
			\begin{align*}
				\int |X(v\eta)|^2 \leq C  (1+\zeta)^\alpha \int (|X\eta|^2 + \eta^2)v^2,
			\end{align*}
			where $\alpha$ is a positive number depending only on $Q$ and $s$. Then Sobolev embedding implies
			\begin{align*}
				\left(\int |\eta v|^{2\chi}\right)^\frac{1}{\chi} \leq C  (1+\zeta)^\alpha \int (|X\eta|^2 + \eta^2)v^2,
			\end{align*}
			where $\chi = \frac{Q}{Q-2} > 1$. For any $0<r<R\leq1$ set $\eta\in C_0^1(B_R)$ with
			\begin{align*}
				0\leq\eta\leq1, \ \ \eta \equiv 1\ \ \mbox{in}\ B_r \ \ \ \mbox{and} \ \ \ |X\eta|\leq\frac{2}{R-r}.
			\end{align*}
			Then we obtain
			\begin{align*}
				\left(\int_{B_r} v^{2\chi}\right)^\frac{1}{\chi} \leq C  \frac{(1+\zeta)^\alpha}{(R-r)^2} \int_{B_R}v^2.
			\end{align*}
			Recalling the definition of $v$, we have
			\begin{align*}
				\left(\int_{B_r} \bar{w}^{2\chi} \bar{w}_t^{\zeta\chi} \right)^\frac{1}{\chi} \leq C  \frac{(1+\zeta)^\alpha}{(R-r)^2} \int_{B_R}\bar{w}^{2} \bar{w}_t^{\zeta}.
			\end{align*}
			Set $\ga = \zeta+2 \geq 2$. Then noting that $\bar{w}_t\leq\bar{w}$, we have
			\begin{align*}
				\left(\int_{B_r} \bar{w}_t^{\ga\chi} \right)^\frac{1}{\chi} \leq C  \frac{(\ga-1)^\alpha}{(R-r)^2} \int_{B_R}\bar{w}^{\ga}
			\end{align*}
			provided the integral in the r.h.s. is bounded. By letting $t\rightarrow+\infty$ we conclude that
			\begin{align*}
				\|\bar{w}\|_{L^{\ga\chi}(B_r)} \leq \left(C \frac{(\ga-1)^\alpha}{(R-r)^2}\right)^\frac{1}{\ga} \|\bar{w}\|_{L^\ga(B_R)}
			\end{align*}
			provided $\|\bar{w}\|_{L^\ga(B_R)} < +\infty$, where $C = C(Q,s)$ is a positive constant independent of $\ga$. We iterate the above estimate, beginning with $\ga=2$, as $2, 2\chi, 2\chi^2, \cdots$. Now set for $i = 0, 1, 2,\cdots$, 
			\begin{align*}
				\ga_i = 2\chi^i \quad \mbox{and}\quad r_i = \frac{1}{2} + \frac{1}{2^{i+1}}.
			\end{align*}
			By $\ga_i = \chi\ga_{i-1}$ and $r_{i-1}-r_i = \frac{1}{2^{i+1}}$, we have for $i = 1, 2,\cdots$,
			\begin{align*}
				\|\bar{w}\|_{L^{\ga_i}(B_{r_i})} \leq C^\frac{i}{\chi^i}\|\bar{w}\|_{L^{\ga_{i-1}}(B_{\ga_{i-1}})}
			\end{align*}
			provided $\|\bar{w}\|_{L^{\ga_i-1}(B_{\ga_i-1})} < +\infty$. Hence by iteration we obtain
			\begin{align*}
				\|\bar{w}\|_{L^{\ga_i}(B_{r_i})} \leq C^{\Sigma\frac{i}{\chi^i}}\|\bar{w}\|_{L^2(B_1)},
			\end{align*}
			in particular
			\begin{align}\label{it}
				\left(\int_{B_{1/2}} \bar{w}^{2\chi^i}\right)^\frac{1}{2\chi^i} \leq \bar{C}\left(\int_{B_1}\bar{w}^2\right)^\frac{1}{2},
			\end{align}
			where $\bar{C} = \bar{C}(Q,s)$, noting the r.h.s. of \eqref{it} is finite since $w$ is bounded.
			Letting $i\rightarrow+\infty$ we get
			\begin{align*}
				\sup_{B_{1/2}} \bar{w} \leq \bar{C}\|\bar{w}\|_{L^2(B_1)},
			\end{align*}
			hence                                   
			\begin{align*}
				\sup_{B_{1/2}} w^+ \leq \bar{C}{\|w^+\|_{L^2(B_1)} + k}.
			\end{align*}
			Recalling the definition of $k$, the proof is completed.
		\end{proof}
	\end{lemma}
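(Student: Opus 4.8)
The plan is to run Moser's iteration scheme adapted to the Grushin vector fields $X=\{X_i\}$, whose only structural input is the Grushin--Sobolev inequality $\|v\|_{L^{2^*}}\le C\|Xv\|_{L^2}$ for compactly supported $v$, with critical exponent $2^*=\tfrac{2Q}{Q-2}$ (see \cite{FL}). Since $-w$ solves $\Delta_X(-w)=-f$ in $B_1$ with $\|-f\|_{L^s(B_1)}=\|f\|_{L^s(B_1)}$, it suffices to bound $\sup_{B_{1/2}}w^+$; running the identical argument for $w^-$ and adding the two bounds then yields the asserted estimate for $\|w\|_{L^\infty(B_{1/2})}$.

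First I would normalise. When $f\not\equiv0$, set $k=\|f\|_{L^s(B_1)}$ (if $f\equiv0$, take $k>0$ arbitrary and let $k\to0^+$ at the end), and put $\bar w=w^++k$ and $c_0=|f|/k$, so that $\bar w\ge k>0$ and $\|c_0\|_{L^s(B_1)}\le1$. For $\zeta\ge0$ and $t>0$, let $\bar w_t=\min\{\bar w,k+t\}$, which is constant on $\{\bar w\ge k+t\}$, hence has $X\bar w_t=0$ there, and let $\eta\in C_0^1(B_1)$ be a cutoff. Testing \eqref{moserr} against $\varphi=\eta^2(\bar w_t^{\zeta}\bar w-k^{\zeta+1})$ --- which is legitimate because $w$ is bounded and the truncation keeps every integral finite, while $\varphi=X\varphi=0$ on $\{w\le0\}$ --- the identity $\int\langle Xw,X\varphi\rangle=-\int f\varphi$, combined with the expansion of $X\varphi$, the vanishing of $X\bar w_t$ on the truncation set, and Young's inequality to absorb the cross term $\int|X\bar w|\,|X\eta|\,\bar w_t^{\zeta}\bar w\,\eta$, produces the energy inequality
\begin{equation*}
\int\eta^2\bar w_t^{\zeta}|X\bar w|^2\le 4\int|X\eta|^2\bar w_t^{\zeta}\bar w^2+4\int c_0\,\eta^2\bar w_t^{\zeta}\bar w^2.
\end{equation*}

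Next, with $v=\bar w_t^{\zeta/2}\bar w$ one has $|Xv|^2\le(1+\zeta)\bigl(\zeta\,\bar w_t^{\zeta}|X\bar w_t|^2+\bar w_t^{\zeta}|X\bar w|^2\bigr)$, so the energy inequality gives $\int|X(v\eta)|^2\le 4(1+\zeta)\int v^2|X\eta|^2+4(1+\zeta)\int c_0v^2\eta^2$. Because $s>\tfrac Q2$ we have $2<\tfrac{2s}{s-1}<2^*$, so Hölder's inequality with $\|c_0\|_{L^s}\le1$, followed by the interpolation $\|\eta v\|_{L^{2s/(s-1)}}\le\epsilon\|X(\eta v)\|_{L^2}+C\epsilon^{-Q/(2s-Q)}\|\eta v\|_{L^2}$ and Young's inequality, allows us to absorb the $c_0$-term into the left-hand side; combined with the Grushin--Sobolev inequality this yields, for all $0<r<R\le1$ and all $\gamma=\zeta+2\ge2$,
\begin{equation*}
\|\bar w\|_{L^{\gamma\chi}(B_r)}\le\Bigl(\frac{C(\gamma-1)^{a}}{(R-r)^2}\Bigr)^{1/\gamma}\|\bar w\|_{L^{\gamma}(B_R)},\qquad\chi=\frac{Q}{Q-2}>1,
\end{equation*}
with $a=a(Q,s)>0$ and $C=C(Q,s)$ --- where on the left one first works with the truncated $\bar w_t$ to keep integrals finite and then sends $t\to+\infty$, legitimate since $w$ is bounded, which also makes all the $L^\gamma(B_R)$ norms finite. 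Iterating this reverse-Hölder estimate along $\gamma_i=2\chi^i$, $r_i=\tfrac12+2^{-(i+1)}$ --- so $\gamma_i=\chi\gamma_{i-1}$, $r_{i-1}-r_i=2^{-(i+1)}$, $r_0=1$, $r_\infty=\tfrac12$ --- and using the convergence of $\sum_i i\chi^{-i}$ gives $\|\bar w\|_{L^{\gamma_i}(B_{r_i})}\le\bar C\|\bar w\|_{L^2(B_1)}$ uniformly in $i$; letting $i\to\infty$ yields $\sup_{B_{1/2}}\bar w\le\bar C\|\bar w\|_{L^2(B_1)}$, hence $\sup_{B_{1/2}}w^+\le\bar C\bigl(\|w^+\|_{L^2(B_1)}+\|f\|_{L^s(B_1)}\bigr)$ after absorbing $k$ and $|B_1|^{1/2}$ into $\bar C=\bar C(Q,s)$.

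I expect the main obstacle to be the absorption of the potential term: one must use $s>Q/2$ in the sharp form that $\tfrac{2s}{s-1}$ is strictly subcritical, and carefully track the dependence of the constant on $\zeta$, since the exponent $a=a(Q,s)$ in $(\gamma-1)^{a}$ must be finite and independent of $i$ for the $\sum_i i\chi^{-i}$-type summation to close the iteration. A secondary point is to justify the integrations by parts and the passage $t\to+\infty$ despite the degeneracy of $\Delta_X$ on $\{x=0\}$; here the boundedness of $w$ and the truncation $\bar w_t$ do all the work, so no machinery beyond the Grushin--Sobolev inequality of \cite{FL} is required.
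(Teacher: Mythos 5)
Your proposal is correct and follows essentially the same route as the paper: the same test function $\varphi=\eta^2(\bar w_t^{\zeta}\bar w-k^{\zeta+1})$ with the truncation $\bar w_t$, the same normalisation $k=\|f\|_{L^s}$, $c_0=|f|/k$, the same use of H\"older, interpolation and the Grushin--Sobolev inequality of \cite{FL} to reach the reverse-H\"older estimate, and the identical iteration $\gamma_i=2\chi^i$, $r_i=\tfrac12+2^{-(i+1)}$. Your only additions are cosmetic improvements: you write the weak formulation correctly as $\int\langle Xw,X\varphi\rangle=-\int f\varphi$ (the paper carries a spurious $cw$ term from the model elliptic argument) and you make explicit the application to $-w$ needed to pass from $\sup_{B_{1/2}}w^+$ to the stated bound on $\|w\|_{L^\infty(B_{1/2})}$, which the paper leaves implicit.
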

	\begin{remark}
		Although the Baouendi-Grushin operator $\Delta_{X}$ is not uniformly elliptic, it still holds that
		\begin{equation}\label{a}
			\int a^{ij} X_i u X_j u \geq \lambda \int \left|Xu\right|^2,
		\end{equation}
		when the coefficient matrix $\left(a^{ij}\right)$ is positive, where $\lambda$ is a positive number. Thanks to \eqref{a} and the Sobolev imbedding, the proof of Lemma \rm{\ref{moser}} is similar to which of Theorem \rm{4.1} in \cite{LH}.
	\end{remark}
	At last we are ready to prove Theorem \ref{thm}.
	\begin{proof}[Proof of Theorem \ref{thm}]
		For $R_1<1$, we assume $C_0$ by
		\begin{equation*}
			C_0 = \bigg(\left\|u\right\|_{L^\infty(B_{R_1})}^2 + \left\|w\right\|_{L^\infty(B_{R_1})}^2\bigg) \int_{B_{R_1}}\psi > 0,
		\end{equation*}
		then clearly we have $h(R_1)\leq C_0$.
		In Theorem \ref{tb}, taking $r_2=\frac{R_1}{3}$, and $r_3=R_1$, we get
		\begin{equation*}
			h\left(\frac{R_1}{3}\right)^{1+\frac{\beta_0}{\alpha_0}} \leq \left(\frac{3}{2}\right)^{\widetilde{C_6}K_1^2\left(1+\frac{\beta_0}{\alpha_0}\right)}C_0^{\frac{\beta_0}{\alpha_0}}h(r),\ \ \ \ 0<r<\frac{R_1}{3}.
		\end{equation*}
		Set $q:=\frac{\beta_0}{\alpha_0}= \frac{C_5^{2K_1}\log\left(\frac{2R_1}{3r}\right)}{\log\left(\frac{3}{2}\right)} = -\log\left( \frac{r}{R_1}\right)^\ga - C_5^2$,
		where $\ga:=\frac{C_5^{2K_1}}{\log\left(\frac{3}{2}\right)}$.
		Recalling that $C_5\geq1$, we obtain for $0<r<\frac{R_1}{3}$,
		\begin{align*}
			h(r) &\geq C_0\left(\frac{h(\frac{R_1}{3})}{C_0}\right)^{1+q}\left(\frac{3}{2}\right)^{-\widetilde{C_6}K_1^2(1+q)}\notag\\[2mm]
			&\geq C_0M_0^{1+q}\left(\frac{r}{R_1}\right)^{\widetilde{C_6} C_7^{K_1} K_1^2},
		\end{align*}
		where $M_0:=\frac{h(\frac{R_1}{3})}{C_0}$, $C_7 = C_5^2.$
		
		Next, we distinguish the following two conditions.
		If $M_0\geq1$,
		\begin{equation*}
			h(r)\geq C_0\left(\frac{r}{R_1}\right)^{\widetilde{C_6} C_7^{K_1} K_1^2}.
		\end{equation*}
		If instead $0<M_0\leq1$, keeping in mind that $C_5\geq1$ and $K_1\geq1$, with $C_8:=\max\left\{\widetilde{C_6},\frac{\log\left(\frac{1}{M_0}\right)}{\log\left(\frac{3}{2}\right)}\right\}$, we obtain
		\begin{align}\label{3}
			h(r) &\geq C_0\left(\frac{r}{R_1}\right)^{\widetilde{C_6} C_7^{K_1}K_1^2+\ga\log\left(\frac{1}{M_0}\right)}\notag\\[2mm]
			&\geq C_0\left(\frac{r}{R_1}\right)^{C_8 C_7^{K_1} (1+K_1^2)}\notag\\[2mm]
			&\geq C_0\left(\frac{r}{R_1}\right)^{2C_8 C_7^{K_1} K_1^2}.
		\end{align}
		By Lemma \ref{Cacci}, we get
		\begin{equation}\label{x}
			\left\|w\right\|_{L^2(B_r)} \leq C\sqrt{1 + K_1}r^{-2}\left\|u\right\|_{L^2(B_{2r})}.
		\end{equation}
		If $u$ and $w$ satisfy the equations (\ref{equ}), by virtue of Lemma \ref{moser}, choosing $s = \frac{Q+1}{2}$, we get
		\begin{equation*}
			\left\|w\right\|_{L^\infty(B_{1/2})} \leq C(Q) \big(\left\|w\right\|_{L^2(B_1)} + K_1 \left\|u\right\|_{L^{\frac{Q+1}{2}}(B_1)}\big),
		\end{equation*}
		which implies by scaling
		\begin{equation}\label{y}
			\left\|w\right\|_{L^\infty(B_{r/4})} \leq C(Q) r^{-\frac{Q}{2}} \left\|w\right\|_{L^2(B_{r/2})} + C(Q)K_1 r^{-\frac{2Q}{Q+1}} \left\|u\right\|_{L^{\frac{Q+1}{2}}(B_{r/2})}.
		\end{equation}
		Thus by applying \eqref{x} and \eqref{y}, one has
		\begin{align*}
			\left\|w\right\|_{L^\infty(B_{r/4})} &\leq C(Q) \sqrt{K_1} r^{-\frac{Q+4}{2}}\left\|u\right\|_{L^2(B_r)} + C(Q)K_1 r^{-\frac{2Q}{Q+1}} \left\|u\right\|_{L^{\frac{Q+1}{2}}(B_{r/2})}\notag\\[2mm]
			& \leq C(Q) \sqrt{K_1} r^{-2}\left\|u\right\|_{L^\infty (B_r)} + C(Q)K_1  \left\|u\right\|_{L^\infty (B_{r/2})}\notag\\[2mm]
			& \leq C(Q)K_1 r^{-2}\left\|u\right\|_{L^\infty(B_r)},
		\end{align*}
		which implies
		\begin{align}\label{2}
			h(\frac{r}{4}) &= \int_{B_{r/4}}(u^2+w^2)\psi \notag\\[2mm]
			&\leq \bigg(\left\|u\right\|^2_{L^\infty(B_{r/4})} + \left\|w\right\|^2_{L^\infty(B_{r/4})}\bigg) \int_{B_{r/4}}\psi \notag\\[2mm]
			& \leq \widetilde{C}r^{-4} \left\|u\right\|_{L^\infty(B_r)}^2,
		\end{align}
		where $\widetilde{C} = C(Q)K_1 \int_{B_{r/4}}\psi$ depending on $Q$, $K_1$ and $\beta$.
		Substituting (\ref{2}) into (\ref{3}) and letting $R_1 \rightarrow 1$ , we finally get for $0<r<1$,
		\begin{equation*}
			\left\|u\right\|_{L^\infty(B_{r})} \geq 4^{-2C_8 C_7^{K_1} K_1^2}\widetilde{C}^{-1}C_0 r^{2C_8 C_7^{K_1} K_1^2+4}.
		\end{equation*}
		Recalling $K_1\geq1$ and $C_7^{K_1}\geq 1$, it holds that $2C_8 C_7^{K_1} K_1^2+4 \leq C_7^{K_1}C_9(K_1^2+1)\leq 2C_9C_7^{K_1} K_1^2$, where $C_9 = \max\left\{2C_8, 4\right\}$, hence
		\begin{equation*}
			\left\|u\right\|_{L^\infty(B_{r})} \geq C_{10}r^{C_{11} C_7^{K_1} K_1^2},
		\end{equation*}
		where $C_{10} := 4^{-2C_8 C_7^{K_1} K_1^2}\widetilde{C}^{-1}C_0$ and $C_{11} := 2C_9$, the proof completed.
	\end{proof}
	
	\subsection*{Acknowledgments}
	This work was supported by the National Natural Science Foundation of China (No. 12161044) and the Jiangxi Provincial Natural Science Foundation (Nos. 20242BCE50042, 20224ACB218001 and 20224BCD41001).

	\end{document}